\title[Recipes to compute the algebraic $K$-theory of Hecke algebras]
  {Recipes to compute the algebraic $K$-theory of Hecke algebras  of reductive $p$-adic groups}
\author{Bartels, A.}
\address{WWU M\"unster\\
  Mathematisches Institut\\
  Einsteinstr.~62, D-48149 M\"unster, Germany} \email{a.bartels@uni-muenster.de}
\urladdr{http://www.math.uni-muenster.de/u/bartelsa} \author{L\"uck, W.}
\address{Mathematisches Institut der Universit\"at Bonn\\
  Endenicher Allee 60\\
  53115 Bonn, Germany} \email{wolfgang.lueck@him.uni-bonn.de}
\urladdr{http://www.him.uni-bonn.de/lueck} \date{March 2024}
\keywords{algebraic $K$-theory of Hecke algebras,  reductive $p$-adic groups, Farrell-Jones Conjecture}
\subjclass[2020]{55P91 (Primary)  20C08, 19D50 (Secondary)}
  \DeclareMathAlphabet{\matheurm}{U}{eur}{m}{n}
\DeclareMathAlphabet{\matheurm}{U}{eur}{m}{n}
\newcommand{\Chcata}[1]{#1\text{-}\matheurm{Ch}}
\newcommand{\MODcat}[1]{#1\text{-}\matheurm{Mod}}
\newcommand{\Or}{\matheurm{Or}}
\newcommand{\OrGF}[2]{\matheurm{Or}_{#2}(#1)}
\newcommand{\Spaces}{\matheurm{Spaces}}
\newcommand{\Spectra}{\matheurm{Spectra}}
\newcommand{\Sub}{\matheurm{Sub}}
\newcommand{\SubGF}[2]{\matheurm{Sub}_{#2}(#1)}
\DeclareMathOperator{\cent}{cent}
\DeclareMathOperator{\cok}{cok}
\DeclareMathOperator{\colim}{colim}
\DeclareMathOperator*{\colimunder}{colim}
\DeclareMathOperator{\conhom}{conhom}
\DeclareMathOperator{\GL}{GL}
\DeclareMathOperator*{\hocolimunder}{hocolim}
\DeclareMathOperator{\id}{id}
\DeclareMathOperator{\map}{map}
\DeclareMathOperator{\mor}{mor}
\DeclareMathOperator{\PGL}{PGL}
\DeclareMathOperator{\pr}{pr}
\DeclareMathOperator{\sh}{sh}
\DeclareMathOperator{\sing}{sing}
\DeclareMathOperator{\SL}{SL}
\newcommand{\COM}{{\calc\hspace{-1pt}\mathrm{om}}}
\newcommand{\COP}{{\calc\hspace{-1pt}\mathrm{op}}}
\newcommand{\OPEN}{{\calo\mathrm{p}}}
  \newcommand{\IC}{\mathbb{C}}
  \newcommand{\IQ}{\mathbb{Q}}
  \newcommand{\IR}{\mathbb{R}}
  \newcommand{\IZ}{\mathbb{Z}}
  \newcommand{\calc}{\mathcal{C}}
  \newcommand{\calf}{\mathcal{F}}
  \newcommand{\calh}{\mathcal{H}}
  \newcommand{\cali}{\mathcal{I}}
  \newcommand{\calo}{\mathcal{O}}
  \newcommand{\calp}{\mathcal{P}}
  \newcommand{\bfK}{\mathbf{K}}
\newcommand{\op}{{\mathrm{op}}}
\newcommand{\EGF}[2]{E_{#2}(#1)}
\newcounter{commentcounter}
\theoremstyle{plain}
\newtheorem{theorem}{Theorem}[section]
\newtheorem{lemma}[theorem]{Lemma}
\newtheorem{condition}[theorem]{Condition}
\newtheorem*{theorem*}{Theorem}
\newtheorem*{theoremA*}{Theorem A}
\newtheorem*{theoremB*}{Theorem B}
\theoremstyle{definition}
\newtheorem{definition}[theorem]{Definition}
\newtheorem{example}[theorem]{Example}
\newtheorem{remark}[theorem]{Remark}
\newtheorem*{definition*}{Definition}
\theoremstyle{remark}
\let\c@equation=\c@theorem\makeatother
\theoremstyle{definition}
\newcounter{othercommentcounter}
\newcommand{\version}[1]              
{\begin{center} last edited on #1\\
last compiled on \today\\
name of tex-file: \jobname
\end{center}
}
\begin{document}

\maketitle

\begin{abstract}
  We compute the algebraic $K$-theory of the Hecke algebra of a reductive $p$-adic group
  $G$ using the fact that the Farrell-Jones Conjecture is known in this context. The main
  tool will be the properties of the associated Bruhat-Tits building and an equivariant
  Atiyah-Hirzebruch spectral sequence.  In particular the projective class group can be
  written as the colimit of the projective class groups of the compact open subgroups of
  $G$.
\end{abstract}


\section{Introduction}\label{sec:Introduction}
\typeout{---------------------------------  Section 1:  Introduction ------------------------}


We begin with stating the main theorem of this paper, explanation will follow:

\begin{theorem}[Main Theorem]\label{the:Main_Theorem}\
  Let $G$ be a td-group which is modulo a normal compact subgroup a subgroup of a
  reductive $p$-adic group.  Let $R$ be a uniformly regular ring with $\IQ \subseteq R$.
  Choose a model $\EGF{G}{\COP}$ for the
  classifying space for  proper smooth $G$-actions. Let $\cali \subseteq \COP$
  be the set of isotropy groups of points in $\EGF{G}{\COP}$.

  Then
  \begin{enumerate}

  \item\label{the:Main_Theorem:assembly}
    The  map induced by the projection $\EGF{G}{\COP} \to G/G$ induces
  for every $n \in \IZ$ an isomorphism
  \[
H_n^G(\EGF{G}{\COP};\bfK_R) \to H_n^G(G/G;\bfK_R) = K_n(\calh(G;R));
\]

\item\label{the:Main_Theorem:spectral_sequence}
There is a (strongly convergent) spectral sequence
\[
  E_{p,q}^2 = S\!H^{G,\cali}_{p}\bigl(\EGF{G}{\COP};\overline{K_q(\calh(?;R))}\bigr)
  \implies K_{p+q}(\calh(G;R)),
\]
whose $E^2$-term is concentrated in the first quadrant;

  \item\label{the:Main_Theorem:K_0}

  The canonical map induced by the various inclusions $K \subseteq G$
  \[
   \colimunder_{K \in \SubGF{G}{\cali}} K_0 (\calh(K;R)) \to K_0 (\calh(G;R))
\]
can be identified with the isomorphism appearing in
assertion~\ref{the:Main_Theorem:assembly} in degree $n = 0$ and hence is bijective;
  \item\label{the:Main_Theorem:negative}
    We have $K_{n}(\calh(G;R)) = 0$  for $n \le -1$.
  \end{enumerate}
\end{theorem}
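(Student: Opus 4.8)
The plan is to obtain all four assertions from the Farrell--Jones Conjecture for Hecke algebras, which holds in the present context, by running the equivariant Atiyah--Hirzebruch spectral sequence of the smooth $G$-$CW$-complex $\EGF{G}{\COP}$ and exploiting that the Hecke algebra of a compact open group has vanishing negative $K$-theory when $\IQ\subseteq R$. A finite-dimensional model of $\EGF{G}{\COP}$, together with the list $\cali$ of occurring isotropy groups, is provided by the Bruhat--Tits building, but the only structural fact really needed below is that the relevant Bredon homology is first-quadrant.

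For assertion~\ref{the:Main_Theorem:assembly}: the Farrell--Jones Conjecture is known for the Hecke algebra of a reductive $p$-adic group with coefficients in a ring $R$ with $\IQ\subseteq R$, and by the standard inheritance properties it passes to closed subgroups and to quotients by compact normal subgroups, hence holds for $G$. It yields that the assembly map $H^G_n(E;\bfK_R)\to H^G_n(G/G;\bfK_R)=K_n(\calh(G;R))$ is an isomorphism for $E$ a classifying space of some family $\calf$; combined with the reduction-of-families principle---applicable since $R$ is uniformly regular with $\IQ\subseteq R$, so that the relevant relative (Nil-type) terms vanish---one obtains the isomorphism already for the family $\COP$. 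As $H^G_*(-;\bfK_R)$ is $G$-homotopy invariant and any two models of $\EGF{G}{\COP}$ are $G$-homotopy equivalent, the map induced by $\EGF{G}{\COP}\to G/G$ is this isomorphism, which proves assertion~\ref{the:Main_Theorem:assembly}.

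For assertion~\ref{the:Main_Theorem:spectral_sequence}: use the equivariant Atiyah--Hirzebruch spectral sequence of the skeletal filtration of $\EGF{G}{\COP}$ for the $G$-homology theory $H^G_*(-;\bfK_R)$ on smooth $G$-$CW$-complexes. Its $E^1$-page is the smooth Bredon chain complex over $\OrG{G}$ restricted to the isotropy groups $\cali$ of $\EGF{G}{\COP}$, with coefficients $G/H\mapsto H^G_q(G/H;\bfK_R)=K_q(\calh(H;R))$, so the $E^2$-page is $S\!H^{G,\cali}_{p}\bigl(\EGF{G}{\COP};\overline{K_q(\calh(?;R))}\bigr)$, abutting to $H^G_{p+q}(\EGF{G}{\COP};\bfK_R)=K_{p+q}(\calh(G;R))$ by assertion~\ref{the:Main_Theorem:assembly}. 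Each $H\in\cali$ is compact open, hence profinite, and because $\IQ\subseteq R$ one has $\calh(H;R)\cong\colimunder_{U\trianglelefteq H}R[H/U]$, a filtered colimit of group rings of finite groups; since $R$ is uniformly regular with $\IQ\subseteq R$, each $R[H/U]$ is again uniformly regular, in particular has vanishing negative $K$-theory, so $K_q(\calh(H;R))=0$ for $q\le-1$. Therefore $E^2_{p,q}=0$ unless $p,q\ge0$: the spectral sequence is first-quadrant, hence strongly convergent. The one step requiring care is the precise identification of the $E^1$- and $E^2$-pages with the smooth Bredon complex and homology in the statement.

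Assertions~\ref{the:Main_Theorem:K_0} and~\ref{the:Main_Theorem:negative} are edge effects of this spectral sequence. For $n\le-1$ every $E^\infty_{p,q}$ with $p+q=n$ vanishes since $E^2$ is first-quadrant, so $K_n(\calh(G;R))=0$, which is assertion~\ref{the:Main_Theorem:negative}. In total degree $0$ only the bidegree $(0,0)$ lies in the first quadrant, and every differential into or out of $(0,0)$ has its other endpoint outside the first quadrant, hence vanishes; thus $K_0(\calh(G;R))=E^\infty_{0,0}=E^2_{0,0}=S\!H^{G,\cali}_{0}\bigl(\EGF{G}{\COP};\overline{K_0(\calh(?;R))}\bigr)$. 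Since $\cali\subseteq\COP$, the fixed set $\EGF{G}{\COP}^H$ is contractible for every $H\in\cali$, so the smooth Bredon chain complex of $\EGF{G}{\COP}$ over $\OrGF{G}{\cali}$ is a free resolution of the constant coefficient system $\underline{\IZ}$; this identifies $S\!H^{G,\cali}_{0}\bigl(\EGF{G}{\COP};\overline{K_0(\calh(?;R))}\bigr)$ with $\underline{\IZ}\otimes_{\OrGF{G}{\cali}}\overline{K_0(\calh(?;R))}=\colimunder_{\OrGF{G}{\cali}}K_0(\calh(?;R))$, which in turn equals $\colimunder_{K\in\SubGF{G}{\cali}}K_0(\calh(K;R))$ because the coefficient system factors through the subgroup category (inner automorphisms act trivially on $K$-theory). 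A diagram chase identifies the resulting composite with the map induced by the inclusions $K\subseteq G$. The main obstacle is exactly this last step: tracking the coefficient systems through the spectral sequence and verifying that the abstract isomorphism it produces in degree $0$ is realized by the maps $K_0(\calh(K;R))\to K_0(\calh(G;R))$ of assertion~\ref{the:Main_Theorem:assembly}.
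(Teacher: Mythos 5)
Your proposal is correct and follows essentially the same route as the paper: assertion~(i) is the externally cited Farrell--Jones result, assertion~(ii) comes from the smooth equivariant Atiyah--Hirzebruch spectral sequence together with the observation that inner automorphisms act trivially so the coefficients descend to $\SubGF{G}{\cali}$, and assertions~(iii) and~(iv) are the edge/connectivity analysis of Theorem~\ref{the:connective_case}, using the vanishing of $K_q(\calh(K;R))$ for compact $K$ and $q\le -1$. The only difference is that you sketch proofs of the two black-box inputs (the assembly isomorphism and the negative-$K$-theory vanishing for compact groups) that the paper simply cites.
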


Note that assertion~\ref{the:Main_Theorem:assembly} of
Theorem~\ref{the:Main_Theorem} is proved
in~\cite[Corollary~1.8]{Bartels-Lueck(2023K-theory_red_p-adic_groups)}.
So this papers deals with implications of it concerning
computations of the algebraic $K$-groups $K_n(\calh(G))$ of the Hecke
algebra of $G$.

A \emph{td-group $G$} is a  locally compact second countable totally disconnected
topological Hausdorff group.  It is \emph{modulo a normal compact subgroup a subgroup of a
  reductive $p$-adic group} if it contains a (not necessarily open) normal compact
subgroup $K$ such that $G/K$ is isomorphic to a subgroup of some reductive $p$-adic group.

A ring is called \emph{uniformly regular}, if it is Noetherian and there exists a natural
number $l$ such that any finitely generated  $R$-module admits a resolution by
projective $R$-modules of length at most $l$.  We write $\IQ \subseteq R$, if for any
integer $n$ the element $n \cdot 1_R$ is a unit in $R$.  Examples for uniformly regular
rings $R$ with $\IQ \subseteq R$ are fields of characteristic zero.

We denote by $\calh(G;R)$ the \emph{Hecke algebra} consisting of locally constant functions
$s \colon G \to R$ with compact support, where the additive structure comes from the
additive structure of $R$ and the multiplicative structure from the convolution
product. Note  that $\calh(G;R)$ is a ring without unit.

We denote by $\EGF{G}{\COP}$ a model for the \emph{classifying space for proper smooth
  $G$-actions}, i.e., a $G$-$CW$-complex, whose isotropy groups are all compact open
subgroups of $G$ and for which $\EGF{G}{\COP}^H$ is weakly contractible for any compact
open subgroup $H \subseteq G$. Two such models are $G$-homotopy equivalent. Hence
$H_n^G(\EGF{G}{\COP};\bfK_R)$ is independent of the choice of a model. If $G$ is a
reductive $p$-adic group with compact center, then its Bruhat-Tits building is a model for
$\EGF{G}{\COP}$. If the center is not compact, one has to pass to the extended
Bruhat-Tits building.

We will construct a \emph{smooth $G$-homology theory} $H_*^G(-;\bfK_R)$ in
Section~\ref{sec:A_brief_review_of_the_farrell_Jones_Conjecture_for_the_algebraic_K-theory_of_Hecke_categories}.
It assigns to a smooth $G$-$CW$-pair $(X,A)$ a collection of abelian groups
$\calh^G_n(X,A;\bfK_R)$ for $n \in \IZ$ that satisfies the expected axioms, i.e., long
exact sequence of a pair, $G$-homotopy invariance, excision, and the disjoint union axiom.
Moreover, for every open subgroup
$U \subseteq G$ and $n \in \IZ$ we have
\begin{equation}
H_n^G(G/U;\bfK_R) \cong K_n(\calh(U;R)).
\label{computation_of_H_n_upper_G(G/U;bfK_R)}  
\end{equation}

Let $\calf$ be a collection of open subgroups of $G$ which is closed under conjugation.
Examples are the set $\COP$ of compact open subgroups of $G$  and the set $\cali$ of isotropy groups of points
 of some model for $\EGF{G}{\COP}$. The subgroup category $\SubGF{G}{\calf}$ appearing in
Theorem~\ref{the:Main_Theorem}~\ref{the:Main_Theorem:K_0} has $\calf$ as set of objects
and will be described in detail in
Subsection~\ref{subsec:The_smooth_orbit_category_and_the_smooth_subgroup_category}.

The abelian groups
$S\!H^{G,\calf}_{p}\bigl(\EGF{G}{\calf};\overline{K_q(\calh(?;R))}\bigr)$ appearing in
Theorem~\ref{the:Main_Theorem}~\ref{the:Main_Theorem:spectral_sequence} will be defined for the
covariant functor $\overline{K_q(\calh(?;R))} \colon \SubGF{G}{\calf} \to \MODcat{\IZ}$,
whose value at $U \in \calf$ is $K_n(\calh(U;R))$,  in
Subsection~\ref{subsec:Cellular_chain_complexes_and_Bredon_homology_smooth}. They are
closely related to the \emph{Bredon homology groups}
$B\!H^{G,\calf}_{p}\bigl(\EGF{G}{\calf};K_q(\calh(?;R))\bigr)$.

The proof of the Main Theorem~\ref{the:Main_Theorem} will be given in
Section~\ref{sec:Proof_of_the_Main_Theorem_ref(the:Main_Theorem)}.

The relevance of the Hecke algebra $\calh(G;R)$ is that the category of non-degenerate
modules over it is isomorphic to the category of smooth $G$-representations with
coefficients in $R$, see for instance~\cite{Bernstein(1992),Garrett(2012)}. Hence in
particular its projective class group $K_0(\calh(G;R))$ is important. The various
inclusions $K \to G$ for $K \in \COP$ induce a map
\begin{equation}
  \bigoplus_{K \in \COP} K_0(\calh(K;R))  \to K_0(\calh(G;R)),
  \label{induction_direct_sums}
\end{equation}
which factorizes over the canonical epimorphism
from $\bigoplus_{K \in \COP} K_0(\calh(K;R))$ to \linebreak $\colimunder_{K \in \SubGF{G}{\cali}} K_0 (\calh(K;R))$
to the isomorphism appearing in
Theorem~\ref{the:Main_Theorem}~\ref{the:Main_Theorem:K_0} and is hence surjective.
Dat~\cite{Dat(2007)} has shown that the map~\eqref{induction_direct_sums} is rationally
surjective for $G$ a reductive $p$-adic group and $R=\IC$.  In particular, the cokernel of
it is a torsion group.  Dat~\cite[Conj.~1.11]{Dat(2003)} conjectured that this cokernel is
$\widetilde w_G$-torsion.  Here $\widetilde w_G$ is a certain multiple of the order of the
Weyl group of $G$.  Dat~\cite[Prop.~1.13]{Dat(2003)}  proved this conjecture for
$G = \GL_n(F)$ for a $p$-adic field $F$ of characteristic zero and asked about the integral version, see the
comment following~\cite[Prop.~1.10]{Dat(2003)}, which is now proven by
Theorem~\ref{the:Main_Theorem}~\ref{the:Main_Theorem:K_0}.

The computations simplify considerably in the case of a reductive $p$-adic group thanks
to the associated (extended) Bruhat-Tits building, see
Sections~\ref{sec:The_main_recipe_for_the_computation_of_the_projective_class_group}
and~\ref{sec:homotopy-colimits}. As an illustration we analyze the projective class groups
of the Hecke algebras of $\SL_n(F)$, $\PGL_n(F)$ and $\GL_n(F)$ in
Section~\ref{sec:K_0-SL-Gl-PGL}.

One of our main tools will be the \emph{smooth equivariant Atiyah-Hirzebruch spectra
  sequence}, which we will establish and examine in
Section~\ref{sec:The_smooth_equivariant_Atiyah-Hirzebruch_spectral_sequence}.


\subsection*{Acknowledgments}

We thank Eugen Hellmann and Linus Kramer for many helpful comments and discussions.

We thank the referee for the careful and helpful report.

The paper is funded by the ERC Advanced Grant \linebreak ``KL2MG-interactions'' (no.
662400) of the second author granted by the European Research Council, by the Deutsche
Forschungsgemeinschaft (DFG, German Research Foundation) under Germany's Excellence
Strategy \--- GZ 2047/1, Projekt-ID 390685813, Hausdorff Center for Mathematics at Bonn,
and by the Deutsche Forschungsgemeinschaft (DFG, German Research Foundation) under
Germany's Excellence Strategy EXC 2044 \--- 390685587, Mathematics M\"unster: Dynamics
\--- Geometry \--- Structure.

The paper is organized as follows:

\tableofcontents


\typeout{---------- Section 2: The smooth equivariant Hirzebruch spectral sequence ------------------}

\section{The smooth equivariant Hirzebruch spectral sequence}%
\label{sec:The_smooth_equivariant_Atiyah-Hirzebruch_spectral_sequence}

Throughout this section we fix a set $\calf$ of open subgroups of $G$ which is closed
under conjugation. Our main examples for $\calf$ are the family $\OPEN$ of all open
subgroups and the family $\COP$ of all compact open subgroups.
A \emph{$\calf$-$G$-$CW$-complex} $X$ is a $G$-$CW$-complex $X$
such that for every $x \in X$ its isotropy group $G_x$ belongs to $\calf$.
A smooth $G$-$CW$-complex is the same as a $\OPEN$-$CW$-$CW$-complex
and a proper smooth $G$-$CW$-complex is the same as a $\COP$-$CW$-complex.
Let $\calh_*^G$ be a smooth $G$-homology theory.

The main result of this section is

\begin{theorem}\label{the:equivariant_Atyiah-Hirzebruch_spectral_sequence_smooth}
  Consider a  pair $(X,A)$ of $\calf$-$G$-$CW$-complexes and a smooth $G$-homology theory $\calh^G_*$.
  Then there is an equivariant Atyiah-Hirzebruch spectral sequence
  converging to $\calh_{p+q}^G(X,A)$, whose $E^2$-term is given by
    \[
      E^2_{p,q} = B\!H^{G,\calf}_p(X,A;\calh^G_q)
    \]
    for the Bredon homology $B\!H^{G,\calf}_p(X,A;\calh^G_q)$ of $(X,A)$ with coefficients in the
    covariant $\IZ\OrGF{G}{\calf}$-module $\calh^G_q$ that sends $G/H$ to $\calh^G_q(G/H)$.
  \end{theorem}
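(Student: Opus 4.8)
The plan is to run the classical construction of the equivariant Atiyah--Hirzebruch spectral sequence from the skeletal filtration, now in the smooth setting, checking at each step that the four axioms of a smooth $G$-homology theory suffice. First I would observe that, since $X$ is an $\calf$-$G$-$CW$-complex, all its relative skeleta $X_p$ (with the convention $X_{-1} = A$) are again $\calf$-$G$-$CW$-complexes, hence smooth $G$-$CW$-complexes, so that $\calh^G_*$ is defined on each pair $(X_p,X_{p-1})$ and $X = \colimunder_p X_p$. Splicing the long exact sequences of the pairs $(X_p,X_{p-1})$ yields an exact couple whose associated spectral sequence has
\[
E^1_{p,q} = \calh^G_{p+q}(X_p,X_{p-1}),\qquad d^1 = \partial \circ (\text{projection}),
\]
with $d^1$ of total degree $-1$. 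For convergence to $\calh^G_{p+q}(X,A)$ I would use that the filtration is bounded below --- indeed $X_p = A$, hence $E^1_{p,q} = 0$, for $p < 0$ --- together with exhaustiveness, i.e.\ $\calh^G_n(X,A) = \colimunder_p \calh^G_n(X_p,A)$, which follows from the disjoint union axiom by the standard mapping telescope argument; a bounded-below exhaustive filtration gives strong convergence.

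Next I would identify the $E^1$-page with the relevant Bredon chain complex. The relative $p$-skeleton fits into a $G$-pushout
\[
\coprod_{i\in I_p} G/H_i \times S^{p-1} \longrightarrow X_{p-1},\qquad \coprod_{i\in I_p} G/H_i \times D^p \longrightarrow X_p,
\]
with all $H_i \in \calf$. Excision, $G$-homotopy invariance, the disjoint union axiom, and the suspension isomorphism (a formal consequence of the axioms, applied with $S^p$ carrying the trivial $G$-action) then produce natural isomorphisms
\[
\calh^G_{p+q}(X_p,X_{p-1}) \;\cong\; \bigoplus_{i\in I_p} \calh^G_{p+q}\bigl(G/H_i \times (D^p,S^{p-1})\bigr) \;\cong\; \bigoplus_{i\in I_p} \calh^G_q(G/H_i).
\]
By the definition of the cellular $\IZ\OrGF{G}{\calf}$-chain complex $C^{G,\calf}_*(X,A)$ and of Bredon homology recalled in Subsection~\ref{subsec:Cellular_chain_complexes_and_Bredon_homology_smooth}, the right-hand side is precisely the degree-$p$ term $C^{G,\calf}_p(X,A)\otimes_{\IZ\OrGF{G}{\calf}}\calh^G_q$ of the chain complex computing $B\!H^{G,\calf}_*(X,A;\calh^G_q)$, where $\calh^G_q$ is regarded as the covariant $\IZ\OrGF{G}{\calf}$-module $G/H \mapsto \calh^G_q(G/H)$; functoriality of this coefficient system on the smooth orbit category uses only that orbits $G/H$ with $H$ open are smooth $G$-$CW$-complexes and that $\calh^G_*$ is $G$-homotopy invariant.

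The main work, and the step I expect to be the genuine obstacle, is to identify $d^1$ under these isomorphisms with the Bredon boundary map. This is a naturality computation tracking how the equivariant attaching maps enter both the topologically defined connecting homomorphism $\partial$ and the algebraically defined cellular differential over $\IZ\OrGF{G}{\calf}$. I would reduce it to the universal situation: it suffices to treat the case in which $\calh^G_q$ is replaced by a representable (free) $\IZ\OrGF{G}{\calf}$-module, where the claim becomes the statement that the $E^1$-complex computes ordinary Bredon homology, and this in turn is essentially the definition of the cellular chain complex of the smooth orbit category together with the compatibility of excision and suspension with the attaching maps. Granting this, $E^2_{p,q} = H_p\bigl(C^{G,\calf}_*(X,A)\otimes_{\IZ\OrGF{G}{\calf}}\calh^G_q\bigr) = B\!H^{G,\calf}_p(X,A;\calh^G_q)$, which completes the proof. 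Note that nothing here uses properties of Hecke algebras: the only input is the four axioms of a smooth $G$-homology theory, so the argument is a verbatim transcription of the non-smooth equivariant Atiyah--Hirzebruch spectral sequence (as in the work of L\"uck and of Davis--L\"uck), the single new point being that every space occurring is an $\calf$-$G$-$CW$-complex and hence smooth, so that $\calh^G_*$ applies throughout.
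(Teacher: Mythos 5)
Your proposal is correct and follows essentially the same route as the paper: the paper also constructs the spectral sequence from the skeletal filtration via the standard exact-couple argument (citing Switzer's non-equivariant treatment and noting it carries over), and likewise leaves the identification of the $E^1$-page and first differential with the Bredon chain complex as a routine verification. The only difference is that you spell out the convergence and $E^1$-identification steps that the paper delegates to the reader.
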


  The remainder of this section is devoted to the definition of the Bredon homology, the
  construction of the equivariant Atiyah-Hirzebruch spectral sequence, and some general
  calculations concerning the $E^2$-term. Convergence means that there is an ascending
  filtration $F_{l,m-l}\calh_{m}^G(X,A)$ for $l = 0,1,2, \ldots$ of $\calh_{m}^G(X,A)$
  such that
  $F_{p,q}\calh_{p+q}^G(X,A)/F_{p-1,q+1}\calh_{p+q}^G(X,A) \cong E^{\infty}_{p,q}$ holds
  for $E^{\infty}_{p,q} = \colim_{r \to \infty} E^r_{p,q}$.

  
\subsection{The smooth orbit category and the smooth subgroup category}%
\label{subsec:The_smooth_orbit_category_and_the_smooth_subgroup_category}

The \emph{$\calf$-orbit category} $\OrGF{G}{\calf}$ has as objects homogeneous $G$-spaces $G/H$
with $H \in \calf$.
Morphisms from $G/H$ to $G/K$ are  $G$-maps $G/H \to G/K$.
We will put no topology on $\OrGF{G}{\calf}$. For
any $G$-map $f \colon G/H \to G/K$ of smooth homogeneous spaces,
there is an element $g \in G$ such that
$gHg^{-1} \subseteq K$ holds and $f$ is the $G$-map $R_{g^{-1}} \colon G/H \to G/K$ sending
$g'H$ to $g'g^{-1}K$.  Given two elements $g_0,g_1 \in G$ such that
$g_iHg_i^{-1} \subseteq K$ holds for $i = 0,1$, we have
$R_{g_0^{-1}} = R_{g_1^{-1}} \Longleftrightarrow g_1g_0^{-1} \in K$.  We get a
bijection
\begin{equation}
  K\backslash \{g \in G \mid gHg^{-1} \subseteq K\}
  \xrightarrow{\cong} \map_G(G/H,G/K), \quad g \mapsto R_{g^{-1}}.
  \label{identification_of_map_G(G/H,G/K)}
\end{equation}

The \emph{$\calf$-subgroup category} $\SubGF{G}{\calf}$ has $\calf$ as the set of
objects. For $H,K \in \calf$ denote by $\conhom_G(H,K)$ the set of group
homomorphisms $f\colon  H \to K$, for which there exists an element $g \in G$ with
$gHg^{-1} \subset K$ such that $f$ is given by conjugation with $g$, i.e.,
$f = c(g): H \to K, \hspace{3mm} h \mapsto ghg^{-1}$.  Note that $c(g) = c(g')$ holds
for two elements $g,g' \in G$ with $gHg^{-1} \subset K$ and $g'Hg'^{-1} \subset K$, if
and only if $g^{-1}g'$ lies in the centralizer
$C_GH = \{g \in G \mid gh=hg \mbox{ for all } h \in H\}$ of $H$ in $G$. The group of inner
automorphisms $\operatorname{Inn}(K)$ of $K$ acts on $\conhom_G(H,K)$ from the left by
composition. Define the set of morphisms
\[
\mor_{\Sub_\COP(G)}(H,K) := \operatorname{Inn}(K) \backslash \conhom_G(H,K).
\]
There is an obvious bijection
\begin{multline}
  K\backslash \{g \in G \mid gHg^{-1} \subseteq K\}/C_GH \xrightarrow{\cong}
  \operatorname{Inn}(K) \backslash \conhom_G(H,K), \\
  \quad KgC_GH \mapsto [c(g)],
  \label{identification_of_K_backslash_(g_in_G_mid_gHg_upper_(-1)_subseteq_K)/C_GH}
\end{multline}
where $[c(g)] \in \operatorname{Inn}(K) \backslash \conhom_G(H,K)$
  is the class represented by the element
  $c(g) \colon H \to K, \; h \mapsto ghg^{-1}$ in $\conhom_G(H,K)$ and
  $K$ acts from the left and $C_GH$ from the right on $\{g \in G \mid gHg^{-1} \subseteq K\}$
  by the multiplication in $G$.

  Let
  \begin{equation}
    P\colon \OrGF{G}{\calf} \to \SubGF{G}{\calf}
    \label{projection_Or(G)_to_Sub(G)}
  \end{equation}
  be the canonical projection which sends an object
  $G/H$ to $H$ and is given on morphisms by the obvious projection under the
  identifications~\eqref{identification_of_map_G(G/H,G/K)}
  and~\eqref{identification_of_K_backslash_(g_in_G_mid_gHg_upper_(-1)_subseteq_K)/C_GH}.


\subsection{Cellular chain complexes and Bredon homology}%
\label{subsec:Cellular_chain_complexes_and_Bredon_homology_smooth}

Given an $\calf$-$G$-$CW$-complex $X$, we obtain a contravariant $\OrGF{G}{\calf}$-space
$O_X \colon \OrGF{G}{\calf} \to \Spaces$ by sending $G/H$ to $\map_G(G/H,X) = X^H$. We get a
contravariant $\SubGF{G}{\calf}$-space $S_X \colon \SubGF{G}{\calf} \to \Spaces$ by
sending $H$ to $C_GH\backslash \map_G(G/H,X) = C_GH\backslash X^H$.  A morphism $H \to K$
given by an element $g \in G$ satisfying $gHg^{-1} \subseteq K$ is sent to the map
$C_GK\backslash X^K \to C_GH \backslash X^H$ induced by the map
$X^K \to X^H, \; x \mapsto g^{-1}x$.

Given a pair $(Y,A)$ with a filtration
$A = Y_{-1} \subseteq Y_0 \subseteq Y_1 \subseteq Y_2\subseteq \cdots \subseteq Y$ with
$Y = \colim_{n \to \infty} Y_n$, we associate to it a $\IZ$-chain complex $C^c_*(Y,A)$,
whose $n$-th chain module is the singular homology $H^{\sing}_n(Y_n,Y_{n-1})$ of the
pair $(Y_n,Y_{n-1})$ (with coefficients in $\IZ$) and whose $n$th differential is given by
the composite
\[H_n^{\sing} (Y_n,Y_{n-1}) \xrightarrow{\partial_n} H_{n-1}^{\sing}(Y_{n-1})
  \xrightarrow{H_{n-1}^{\sing}(i_{n-1})} H^{\sing}_{n-1}(Y_{n-1},Y_{n-2})
\]
for $\partial_n$ the boundary operator of the pair $(Y_n,Y_{n-1})$ and the inclusion
$i_{n-1} \colon Y_{n-1} = (Y_{n-1},\emptyset) \to (Y_{n-1},Y_{n-2})$.

Given a pair of $\calf$-$G$-$CW$-complexes $(X,A)$, the filtration by its skeletons induces
filtrations on the spaces $X^H$ and $C_GH\backslash X^H$ for every $H \in \calf$.
We get a contravariant $\IZ\OrGF{G}{\calf}$-chain complex
$C_*^{\OrGF{G}{\calf}}(X,A) \colon \OrGF{G}{\calf} \to \Chcata{\IZ}$ and a contravariant
$\IZ\SubGF{G}{\calf}$-chain complex $C_*^{\SubGF{G}{\calf}}(X,A) \colon \SubGF{G}{\calf} \to \Chcata{\IZ}$
by putting
\begin{eqnarray*}
  C_*^{\OrGF{G}{\calf}}(X,A)(G/H)
  & := &
  C_*^c(O_X(G/H),O_A(G/H)) = C_*^c(X^H,A^H);
  \\
  C_*^{\SubGF{G}{\calf}}(X,A)(H)
  & := &
  C_*^c(S_X(X)(H),S_A(H)) = C_*^c(C_GH\backslash X^H,C_GH\backslash A^H).
\end{eqnarray*}

Choose a $G$-pushout
\begin{equation}
  \xymatrix@!C=11em{\coprod_{i\in I_n} G/H_i \times S^{n-1}
    \ar[r]^-{\coprod_{i\in I_n}  q^n_i} \ar[d]
&
X_{n-1} \ar[d]
\\
\coprod_{i \in I_n} G/H_i \times D^n \ar[r]_-{\coprod_{i\in I_n}  Q^n_i} 
&
X_{n-1}.
}
\label{pushout_for_(X_n,X_(n-1)}
\end{equation}
It induces for every closed subgroup $H \subseteq G$  pushouts
\[
  \xymatrix@!C=11em{\coprod_{i\in I_n} (G/H_i)^H \times S^{n-1}
    \ar[r]^-{\coprod_{i\in I_n}  (q^n_i)^H} \ar[d]
&
X_{n-1}^H \ar[d]
\\
\coprod_{i \in I_n} (G/H_i)^H \times D^n \ar[r]_-{\coprod_{i\in I_n}  (Q^n_i)^H} 
&
X_{n-1}^H
}
\]
and
\[
  \xymatrix@!C=15em{\coprod_{i\in I_n} C_GH\backslash (G/H_i)^H \times S^{n-1}
    \ar[r]^-{\coprod_{i\in I_n}  C_GH\backslash(q^n_i)^H} \ar[d]
&
C_GH\backslash X_{n-1}^H \ar[d]
\\
\coprod_{i \in I_n} C_GH\backslash (G/H_i)^H \times D^n
\ar[r]_-{\coprod_{i\in I_n}  C_GH\backslash(Q^n_i)^H} 
&
C_GH\backslash X_{n-1}^H.
}
\]
Note that $(G/H_i)^H$ agrees with $\mor_{\OrGF{G}{\calf}}(G/H,G/H_i) = \map_G(G/H,G/H_i)$.
In the sequel we  denote by $\IZ S$ for a set $S$ the free $\IZ$-module with the set $S$ as basis.
Since singular homology satisfies the disjoint union axiom, homotopy invariance and excision,
we obtain an isomorphism of contravariant $\IZ \OrGF{G}{\calf}$-modules
\begin{equation}
\bigoplus_{i \in I_n} \IZ\mor_{\OrGF{G}{\calf}}(?,G/H_i) \xrightarrow{\cong} C_n^{\OrGF{G}{\calf}}(X,A),
\label{computing_C_n_upper_Or(G)}
\end{equation}
where $\IZ\mor_{\OrGF{G}{\calf}}(?,G/H_i)$ is the free $\IZ\Or(G)$-module based at the object $G/H_i$,
see~\cite[Example~9.8 on page~164]{Lueck(1989)}, and analogously an isomorphism of  contravariant
$\IZ\SubGF{G}{\calf}$-modules
\begin{equation}
\bigoplus_{i \in I_n} \IZ\mor_{\IZ\SubGF{G}{\calf}}(?,H_i) \xrightarrow{\cong} C_n^{\SubGF{G}{\calf}}(X,A).
\label{computing_C_n_upper_Sub(G)}
\end{equation}

If $P_*C_*^{\OrGF{G}{\calf}}(X,A)$ is the $\IZ\SubGF{G}{\calf}$-chain complex obtained 
by induction with $P \colon \OrGF{G}{\calf} \to  \SubGF{G}{\calf}$ from $C_*^{\OrGF{G}{\calf}}(X,A)$,
see~\cite[Example~9.15 on page~166]{Lueck(1989)}, we conclude
from~\eqref{computing_C_n_upper_Or(G)} and~\eqref{computing_C_n_upper_Sub(G)}
that the canonical  map of $\IZ\SubGF{G}{\calf}$-chain complexes
\begin{equation}
 P_*C_*^{\OrGF{G}{\calf}}(X,A) \xrightarrow{\cong} C_*^{\SubGF{G}{\calf}}(X,A)
  \label{iso_P_ast_C_upper_oR_to_C_upper_Sub}
\end{equation}
is an isomorphism.

For a covariant $\IZ\Or(G)$-module $M$, we get from the
tensor product over $\OrGF{G}{\calf}$, see~\cite[9.13 on page~166]{Lueck(1989)},
a $\IZ$-chain complex $C_*^{\OrGF{G}{\calf}}(X,A)\otimes_{\IZ\OrGF{G}{\calf}} M$.

\begin{definition}[Bredon homology]\label{def:Bredon_homology}
 We define the $n$-th \emph{Bredon homology} to be the $\IZ$-module
\[
  B\!H^{G,\calf}_n(X,A;M) = H_n\bigl(C_*^{\OrGF{G}{\calf}}(X,A)\otimes_{\IZ\OrGF{G}{\calf}} M\bigr).
\]
Given a covariant $\IZ\SubGF{G}{\calf}$-module $N$, define analogously
\[
  S\!H^{G,\calf}_n(X,A;N) = H_n\bigl(C_*^{\SubGF{G}{\calf}}(X,A) \otimes_{\IZ\SubGF{G}{\calf}} N\bigr).
\]
\end{definition}

Given a covariant $\IZ\SubGF{G}{\calf}$-module $N$, define the covariant $\IZ\OrGF{G}{\calf}$-module
$P^*N$ to be $N \circ P$.  We get from the adjunction of~\cite[9.22 on
page~169]{Lueck(1989)} and~\eqref{iso_P_ast_C_upper_oR_to_C_upper_Sub} a natural
isomorphism of $\IZ$-chain complexes
\begin{equation}
  C_*^{\SubGF{G}{\calf}}(X,A) \otimes_{\IZ\SubGF{G}{\calf}} N
  \xrightarrow{\cong}  C_*^{\OrGF{G}{\calf}}(X,A) \otimes_{\IZ \OrGF{G}{\calf}} P^*N
  \label{Adjunction_between_chain_complexs_over_or_and_sub}
\end{equation}
and hence natural isomorphism of $\IZ$-modules
\begin{equation}
  B\!H^{G,\calf}_n(X,A;P^*N) \xrightarrow{\cong} S\!H^{G,\calf}_n(X,A;N).
  \label{Adjunction_between_BH_and_SH}
\end{equation}

Let $(X,A) $ be a pair of $\calf$-$CW$-complexes. Denote by $\cali$ the set of isotropy
groups of points in $X$.  Let $M$ be a covariant $\IZ\OrGF{G}{\calf}$-module and $N$ be a
covariant $\SubGF{G}{\calf}$-module.  Denote by $M|_{\cali}$ and $N|_{\cali}$ their
restrictions to $\OrGF{G}{\cali}$ and $\SubGF{G}{\cali}$.  Then one easily checks
using~\cite[Lemma~1.9]{Davis-Lueck(1998)}
that there are canonical isomorphisms
\begin{eqnarray}
  B\!H^{G,\cali}_n(X,A;M|_{\cali}) & \cong & B\!H^{G,\calf}_n(X,A;M);
\label{BH_and_enlarging_family}
  \\
  S\!H^{G,\cali}_n(X,A;N|_{\cali}) & \cong & B\!H^{G,\calf}_n(X,A;N).
\label{SH_and_enlarging_family}
\end{eqnarray}


\subsection{The construction of the equivariant Atiyah-Hirzebruch spectral sequence}%
\label{subsec:The_construction_the_equivariant_Atiyah-Hirzebruch_spectral_sequence_smooth}

\begin{proof}[Proof of Theorem~\ref{the:equivariant_Atyiah-Hirzebruch_spectral_sequence_smooth}]
   Since $(X,A)$ comes with the skeletal filtration, there is by a general construction a spectral sequence
   \[
   E_{p,q}^r,\quad   d^r_{p,q} : E^r_{p,q} \to E^r_{p-r,q+r-1}
   \]
converging to $\calh^G_{p+q}(X,A)$, whose $E_1$-term is given by
\[
  E^1_{p,q}  =  \calh_{p+q}^{G}(X_p,X_{p-1})
\]
and the first differential is the composite
\[d^1_{p,q} : E^1_{p,q} =  \calh_{p+q}^{G}(X_p,X_{p-1})
\to \calh_{p+q-1}^{G}(X_{p-1})
\to \calh_{p+q-1}^{G}(X_{p-1},X_{p-2})=  E^1_{p-1,q},
\]
where the first map is the boundary operator of the pair $(X_p,X_{p-1})$ and the second is
induced by the inclusion. The elementary construction is explained for trivial $G$ for
instance in~\cite[15.6 on page~339]{Switzer(1975)}. The construction
carries directly over to the equivariant setting.

The straightforward proof of the identification of $E^2_{p,q}$ with $B\!H^{G,\calf}_p(X,A;\calh_q)$
is left to the reader. 
\end{proof}

\subsection{Passing to the subgroup category}\label{subsec:Passing_to_the_subgroup_category}

\begin{condition}[Sub$|_{\calf}$]\label{con:(Sub)}
  Let $\calh^G_*(-)$ be a smooth $G$-homology theory.  
  Then $\calh^G_*(-)$ satisfies the Condition (Sub$|_{\calf}$) if for any $H \in \calf$ and
  $g \in C_GH$ the $G$-map $R_{g^{-1}} \colon G/H \to G/H$ sending $g'H$ to $g'g^{-1}H$
  induces the  identity on $\calh^G_q(G/H)$, i.e., $\calh^G_q(R_{g^{-1}}) = \id_{\calh^G_q(G/H)}$.
\end{condition}

\begin{remark}\label{rem:passage_to_Sub}
  Suppose that the $G$-homology theory $\calh^G_*$ satisfies the Condition (Sub$|_{\calf}$).
  Then the covariant $\IZ\OrGF{G}{\calf}$-module $\calh^G_q$
  sending $G/H$ with $H \in \calf$ to $\calh^G_q(G/H)$
  defines a covariant $\IZ\SubGF{G}{\calf}$-module
  $\overline{\calh^G_q} \colon \SubGF{G}{\calf}\to \MODcat{\IZ}$ uniquely determined by
  $\calh_q^G = \overline{\calh^G_q} \circ P$ for the projection
  $P \colon \OrGF{G}{\calf} \to \SubGF{G}{\calf}$. Moreover, we obtain
  from~\eqref{Adjunction_between_BH_and_SH} 
  for every pair $(X,A)$ of $\calf$-$G$-$CW$-complexes  natural isomorphisms
  \[
    B\!H^{G,\calf}_n(X,A;\calh_q^G(-)) \xrightarrow{\cong} S\!H^{G,\calf}_n(X,A;\overline{\calh_q^G(-)}).
  \]
  Note that the right hand side is often easier to compute than the left hand side.  One
  big advantage of $\Sub(G)$ in comparison with $\Or(G)$ is that for a finite subgroup
  $H \subseteq G$ the set of automorphisms of $H$ is the group $N_GH/H\cdot C_GH$, which
  is finite, whereas the set of automorphisms of $G/H$ in $\Or(G)$ for a finite group $H$
  is the group $N_GH/H$, which is not necessarily finite. This is a key ingredient in the
  construction of an equivariant Chern character for discrete groups $G$ and proper
  $G$-$CW$-complexes in~\cite{Lueck(2002b),Lueck(2002d)}.

  If $G$ is abelian, $\SubGF{G}{\calf}$ reduces to the poset of open subgroups of $G$ ordered by inclusion.
\end{remark}


\subsection{The connective case}%
\label{subsec:The_connective_case}

\begin{theorem}\label{the:connective_case}

\begin{enumerate}
\item\label{the:connective_case:q_smaller_0}
  Suppose that $\calh^G_q(G/H) = 0$ for every  $H \in \calf$ and $q \in \IZ$ with $q < 0$.
  Then we get for every  pair $(X,A)$  of  $\calf$-$G$-$CW$-complexes
  and every    $q \in \IZ$ with $q < 0$
  \[
  \calh^G_q(X,A) = 0;
  \]

\item\label{the:connective_case:q_is_0}
  Choose a model  $\EGF{G}{\COP}$ for the classifying space of smooth proper $G$-actions.
  Let $\cali$ be the set of isotropy groups of points in  $\EGF{G}{\COP}$.
  Suppose that $\calh^G_q(G/H) = 0$ for every open $H \in \cali$ and $q \in \IZ$ with $q < 0$.

  \begin{enumerate}
  \item\label{the:connective_case:q_is_0_Or}
    Then for every $q < 0$ we have $\calh^G_q(\EGF{G}{\COP}) = 0$,
    the edge homomorphism induces an isomorphism
    \[
    B\!H_0^G(\EGF{G}{\COP};\calh_q^G(-)) \xrightarrow{\cong} \calh^G_0(\EGF{G}{\calf})
    \]
 and the canonical map
 \[
 \colimunder_{G/H \in \OrGF{G}{\cali}} \calh^G_0(G/H) \xrightarrow{\cong} \calh^G_0(\EGF{G}{\calf})
\]
is bijective;

\item\label{the:connective_case:q_is_0_Sub}
  Suppose additionally  that $\calh^G_*$ satisfies Condition (Sub$_\cali$), see Condition~\ref{con:(Sub)}.
  Then the edge homomorphism induces an isomorphism
    \[
    S\!H_0^G(\EGF{G}{\COP};\overline{\calh_q^G(-)}) \xrightarrow{\cong} \calh^G_0(\EGF{G}{\calf})
  \]
  and  the canonical map
 \[
 \colimunder_{H\in \SubGF{G}{\cali}} \overline{\calh^G_0}(H) \xrightarrow{\cong} \calh^G_0(\EGF{G}{\COP})
\]
is bijective.
\end{enumerate}
\end{enumerate}
\end{theorem}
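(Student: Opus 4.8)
The plan is to derive all four statements from the smooth equivariant Atiyah--Hirzebruch spectral sequence of Theorem~\ref{the:equivariant_Atyiah-Hirzebruch_spectral_sequence_smooth}, applied to suitable families, together with the explicit description of the Bredon chain complex coming from~\eqref{computing_C_n_upper_Or(G)}. For assertion~\ref{the:connective_case:q_smaller_0} I run that spectral sequence for the pair $(X,A)$. As $(X,A)$ is a pair of $\calf$-$G$-$CW$-complexes, \eqref{computing_C_n_upper_Or(G)} writes $C_n^{\OrGF{G}{\calf}}(X,A)$ as a direct sum of free modules $\IZ\mor_{\OrGF{G}{\calf}}(?,G/H_i)$ with all $H_i\in\calf$; hence $C_*^{\OrGF{G}{\calf}}(X,A)\otimes_{\IZ\OrGF{G}{\calf}}\calh^G_q$ has $n$-th term $\bigoplus_{i\in I_n}\calh^G_q(G/H_i)$, which vanishes for $q<0$ by hypothesis. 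Thus $E^2_{p,q}=B\!H^{G,\calf}_p(X,A;\calh^G_q)=0$ whenever $q<0$, and $E^2_{p,q}=0$ whenever $p<0$ because the Bredon chain complex is concentrated in non-negative degrees, so the spectral sequence lives in the first quadrant. For $n<0$ any $E^{\infty}_{p,q}$ with $p+q=n$ has $p<0$ or $q<0$ and hence is zero; since the convergence filtration of $\calh^G_n(X,A)$ is exhaustive with all subquotients $E^{\infty}_{p,q}$ zero, we conclude $\calh^G_n(X,A)=0$.

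For part~\ref{the:connective_case:q_is_0_Or} I first note that $\EGF{G}{\COP}$ is an $\cali$-$G$-$CW$-complex, that $\cali$ is closed under conjugation, and that the members of $\cali$ are compact open; so assertion~\ref{the:connective_case:q_smaller_0} applies verbatim with $\calf$ replaced by $\cali$, giving $\calh^G_q(\EGF{G}{\COP})=0$ for $q<0$ and placing the spectral sequence for $(\EGF{G}{\COP},\emptyset)$ in the first quadrant. In total degree $0$ the only possibly non-zero $E^2$-term is $E^2_{0,0}$; no differential leaves $(0,0)$ (the targets have negative $p$-degree) and none enters it ($E^2_{r,1-r}=0$ for $r\ge2$ since $1-r<0$), so $E^{\infty}_{0,0}=E^2_{0,0}$ and the edge homomorphism is an isomorphism $\calh^G_0(\EGF{G}{\COP})\xrightarrow{\cong}E^2_{0,0}=B\!H^{G,\cali}_0(\EGF{G}{\COP};\calh^G_0)$. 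Since $(\EGF{G}{\COP})^H$ is weakly contractible for every $H\in\cali$, the dimensionwise free complex $C_*^{\OrGF{G}{\cali}}(\EGF{G}{\COP})$ is a free resolution of the constant contravariant $\IZ\OrGF{G}{\cali}$-module $\underline{\IZ}$, whence $B\!H^{G,\cali}_0(\EGF{G}{\COP};\calh^G_0)=\underline{\IZ}\otimes_{\IZ\OrGF{G}{\cali}}\calh^G_0=\colimunder_{G/H\in\OrGF{G}{\cali}}\calh^G_0(G/H)$ by the standard identification of this coend with a colimit, cf.~\cite{Lueck(1989)}. Tracing the maps through shows this isomorphism is the canonical one assembled from the $G$-maps $G/H\to\EGF{G}{\COP}$.

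For part~\ref{the:connective_case:q_is_0_Sub}, Condition~(Sub$_\cali$) and Remark~\ref{rem:passage_to_Sub} supply the covariant $\IZ\SubGF{G}{\cali}$-module $\overline{\calh^G_0}$ with $\calh^G_0=P^*\overline{\calh^G_0}$ and the natural isomorphism $B\!H^{G,\cali}_0(\EGF{G}{\COP};\calh^G_0)\xrightarrow{\cong}S\!H^{G,\cali}_0(\EGF{G}{\COP};\overline{\calh^G_0})$ from~\eqref{Adjunction_between_BH_and_SH}; composing with the isomorphism of part~\ref{the:connective_case:q_is_0_Or} gives the desired edge isomorphism onto $S\!H^{G,\cali}_0(\EGF{G}{\COP};\overline{\calh^G_0})$. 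For the colimit statement I use that $P\colon\OrGF{G}{\cali}\to\SubGF{G}{\cali}$ is surjective on objects and full --- by~\eqref{identification_of_map_G(G/H,G/K)} and~\eqref{identification_of_K_backslash_(g_in_G_mid_gHg_upper_(-1)_subseteq_K)/C_GH} the induced map on each morphism set is a quotient map --- so that every zigzag in $\SubGF{G}{\cali}$ lifts and the canonical map $\colimunder_{G/H\in\OrGF{G}{\cali}}\calh^G_0(G/H)\to\colimunder_{H\in\SubGF{G}{\cali}}\overline{\calh^G_0}(H)$ is an isomorphism; combining this with part~\ref{the:connective_case:q_is_0_Or} finishes the proof.

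The step I expect to be the main obstacle is the bookkeeping in part~\ref{the:connective_case:q_is_0_Or} identifying the composite of the edge homomorphism with the isomorphism $B\!H^{G,\cali}_0(\EGF{G}{\COP};\calh^G_0)\cong\colimunder_{G/H\in\OrGF{G}{\cali}}\calh^G_0(G/H)$ as exactly the map built from the $G$-maps $G/H\to\EGF{G}{\COP}$: this is a naturality statement relating the elementary construction of the spectral sequence (its $E^1$-page and edge map) to the functoriality of $C_*^{\OrGF{G}{\cali}}(-)$ in the $\cali$-$G$-$CW$-complex, routine but needing care, and the analogous compatibility has to be recorded for part~\ref{the:connective_case:q_is_0_Sub} as well.
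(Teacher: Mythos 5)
Your proposal is correct and follows essentially the same route as the paper: vanishing of the $E^2$-term for $q<0$ (and $p<0$) gives part~\ref{the:connective_case:q_smaller_0}, the first-quadrant concentration makes the edge map at $(0,0)$ an isomorphism, the identification $B\!H_0\cong\underline{\IZ}\otimes_{\IZ\OrGF{G}{\cali}}\calh^G_0\cong\colim$ uses exactly that $C_*^{\OrGF{G}{\cali}}(\EGF{G}{\COP})$ is a free resolution of $\underline{\IZ}$, and the passage to $\SubGF{G}{\cali}$ uses Condition~(Sub$_\cali$) together with the fact that $P$ is bijective on objects and full. Your justification of the last colimit comparison is in fact slightly more detailed than the paper's, which simply asserts the canonical isomorphism of colimits.
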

\begin{proof}~\ref{the:connective_case:q_smaller_0} This follows directly from the smooth equivariant
  Atyiah-Hirzebruch spectral sequence of Theorem~\ref{the:equivariant_Atyiah-Hirzebruch_spectral_sequence_smooth}
 \\[1mm]~\ref{the:connective_case:q_is_0_Or}
We get $\calh^G_q(\EGF{G}{\COP}) = 0$ for $q < 0$ from assertion~\ref{the:connective_case:q_smaller_0}.

We get from the the smooth equivariant
Atyiah-Hirzebruch spectral sequence of Theorem~\ref{the:equivariant_Atyiah-Hirzebruch_spectral_sequence_smooth}
an isomorphism
\[
  B\!H_0^{G,\cali}(\EGF{G}{\COP};\calh^G_0) = H_0(C_*^{\OrGF{G}{\cali}}(\EGF{G}{\COP}) \otimes_{\IZ\OrGF{G}{\cali}}\calh^G_0)
 \xrightarrow{\cong}  \calh_0^G(\EGF{G}{\COP}).
\]
since $E^2_{p,q} = B\!H_0^{G,\cali}(\EGF{G}{\COP};\calh^G_q) = 0$ is valid for $p,q \in \IZ$ if $p < 0$ or $q < 0$  holds.
Since  the $\IZ\OrGF{G}{\cali}$-module
$C_n^{\OrGF{G}{\cali}}(\EGF{G}{\COP})$ is free in the sense of~\cite[9.16 on page~167]{Lueck(1989)}
for $n \ge 0$ by~\eqref{computing_C_n_upper_Or(G)} and $\EGF{G}{\COP}^H$ is weakly contractible for $H \in \cali$,
the $\IZ \OrGF{G}{\cali}$-chain complex
$C_*^{\OrGF{G}{\cali}}(\EGF{G}{\COP})$ is a projective
$\IZ\OrGF{G}{\cali}$-resolution of the constant contravariant
$\IZ \OrGF{G}{\cali}$-module $\underline{\IZ}$, whose value is $\IZ$
at each object and assigns to any morphism $\id_{\IZ} $. Since $- \otimes_{\IZ \otimes_{\IZ\OrGF{G}{\cali}}}\calh^G_q$ is
right exact by~\cite[9.23 on page~169]{Lueck(1989)}, we get a 
isomorphism
\[
  H_0(C_*^{\OrGF{G}{\cali}}(\EGF{G}{\COP}) \otimes_{\IZ\OrGF{G}{\cali}}\calh^G_0)
  \cong 
  \underline{\IZ} \otimes_{\IZ\OrGF{G}{\cali}}\calh^G_0.
\]
We conclude from the adjunction appearing in~\cite[9.21 on page~169]{Lueck(1989)}
and the universal property of the  colimit  that there is a canonical isomorphism
\[
  \colimunder_{G/H \in \OrGF{G}{\cali}} \calh^G_0(G/H) 
  \cong
  \underline{\IZ} \otimes_{\IZ\OrGF{G}{\cali}}\calh^G_0.
\]
This finishes the proof of assertion~\ref{the:connective_case:q_is_0_Or}.
\\[1mm]~\ref{the:connective_case:q_is_0_Sub} This follows from
assertion~\ref{the:connective_case:q_is_0_Or}, since we get from Condition (Sub$_\cali$) a
canonical isomorphism
\[\colimunder_{G/H \in \OrGF{G}{\cali}} \calh^G_0(G/H) \xrightarrow{\cong} 
  \colimunder_{H\in \SubGF{G}{\cali}} \overline{\calh^G_q}(H).
\]
for the covariant $\IZ \SubGF{G}{\cali}$-module $\overline{\calh^G_q}$ determined by the
covariant $\IZ\OrGF{G}{\cali}$-module $\calh^G_q$, see Remark~\ref{rem:passage_to_Sub}.
\end{proof}


\subsection{The first differential}\label{subsec:The_first_differential}

Let $X$ be an $\calf$-$G$-$CW$-complex.  Suppose that $X_0 = \coprod_{j \in J} G/V_j$ and
that $X_1$ is given by the $G$-pushout
\begin{equation}
  \xymatrix@!C=11em{\coprod_{i\in I} G/U_i \times S^0
    \ar[r]^-{\coprod_{i\in I_n}  q_i} \ar[d]
&
X_0 \ar[d]
\\
\coprod_{i \in I} G/U_i \times D^1 \ar[r]_-{\coprod_{i\in I}  Q_i} 
&
X_1.
}
\label{pushout_for_(X_1,X_0}
\end{equation}
We want to figure out the map of $\IZ\OrGF{G}{\calf}$-modules $\gamma$ making the following
diagram commute
\[
  \xymatrix{\bigoplus_{i \in I} \IZ\mor_{\OrGF{G}{\calf}}(?,G/U_i) \ar[d]_{\cong} \ar[r]^-{\gamma}
    &
    \bigoplus_{j \in J} \IZ\mor_{\OrGF{G}{\calf}}(?,G/V_j) \ar[d]^{\cong} 
    \\
    C_1^{\OrGF{G}{\calf}}(X) \ar[r]_{c_n}
    &
    C_{0}^{\OrGF{G}{\calf}}(X)
  }
\]
where the vertical isomorphisms come from the
isomorphisms~\eqref{computing_C_n_upper_Or(G)}.
In order to describe $\gamma$, we have to define for each $i \in I$ and $j \in J$
a map of $\IZ\Or(G)$-modules
\[
\gamma_{i,j} \colon \IZ\mor_{\OrGF{G}{\calf}}(?,G/H_i) \to \IZ\mor_{\IZ\OrGF{G}{\calf}}(?,G/K_j)
\]
such that $\{j \in I_{n-1}\mid \gamma_{i,j} \not= 0\}$ is finite for every $i \in I_n$.
Note that $\gamma_{i,j}$ is determined by the image of $\id_{G/H_i}$. Hence we need
to specify for $i \in I$ and $j \in J$ an element
\begin{equation}
  \overline{\gamma_{i,j}} \in \IZ\mor_{\OrGF{G}{\calf}}(G/U_i,G/V_j) = \IZ\map_G(G/U_i,G/V_j).
  \label{overline(gamma_(i,j))}
\end{equation}

For each $i \in I$ there are two elements $j_{-}(i)$ and $j_{+}(i)$ in $J$ such that the
image of $G/H_i \times \{\pm 1\}$ under the map $q_i$ appearing
in~\eqref{pushout_for_(X_1,X_0} is the summand $G/K_{j_{\pm }}(i)$ belonging to
$j_{\pm}(i)$ of $\coprod_{j \in I_0} G/K_j$, if we write $S^0 = \{-1,1\}$.  Denote by
$(q_i^1)_{\pm 1} \colon G/H_i \to G/K_{j_{\pm}}$ the restriction of $q_i^1$ to
$G/H_i \times \{\pm 1\}$.  We leave the elementary proof of the next lemma to the reader.

    \begin{lemma}\label{lem:value_of_overline(gamma_(i,j)_n_is_1} 
      We get in $\IZ\map_G(G/H_i,G/K_j)$
      \[
        \overline{\gamma_{i,j}}  =
        \begin{cases}
          \pm [(q_i^1)_{\pm 1}]
          &
          \text{if} \; j = j_{\pm}(i) \; \text{and}\; j_{-}(i) \not= j_{+}(i);
         \\
         [(q_i^1)_{+1}] - [(q_i^1)_{-1}]
         &
         \text{if} \; j = j_{-}(i) = j_{+}(i);
         \\
         0 & \text{if}\; j \notin \{ j_{-}(i), j_{+}(i)\}.
       \end{cases}
       \]
     \end{lemma}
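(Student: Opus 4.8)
The plan is to unwind the definition of the first differential $c_1$ of the cellular $\IZ\OrGF{G}{\calf}$-chain complex $C_*^{\OrGF{G}{\calf}}(X)$ from Subsection~\ref{subsec:Cellular_chain_complexes_and_Bredon_homology_smooth}, evaluate it at the object $G/U_i$ on the free generator $\id_{G/U_i}$, and recognize the outcome as the equivariant incarnation of the classical incidence-number formula $\partial[D^1] = [{+}1] - [{-}1]$ for a one-dimensional $CW$-complex.

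First I would recall that, at the object $G/H$, the differential $c_1$ is by definition the composite $H^{\sing}_1(X_1^H,X_0^H)\xrightarrow{\partial_1}H^{\sing}_0(X_0^H)\to H^{\sing}_0(X_0^H,X_{-1}^H)$; since the distinguished subcomplex is here empty we have $X_{-1}^H=\emptyset$ and the second map is the identity, so $c_1=\partial_1$. Applying the $H$-fixed-point functor to the $G$-pushout~\eqref{pushout_for_(X_1,X_0}, which is again a pushout of spaces, exhibits $(X_1^H,X_0^H)$ as a relative $CW$-pair with $0$-part $X_0^H$ and one $1$-cell for each point of $\coprod_{i\in I}(G/U_i)^H$, attached along $S^0\times(G/U_i)^H$ via $q_i^H$ (note that $(G/U_i)^H$ is a discrete set, as $U_i$ is open, and that $H$ acts trivially on $S^0$ and $D^1$). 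As in the derivation of~\eqref{computing_C_n_upper_Or(G)} (disjoint union axiom, homotopy invariance and excision for singular homology) this yields identifications $H^{\sing}_1(X_1^H,X_0^H)\cong\bigoplus_{i\in I}\IZ(G/U_i)^H=\bigoplus_{i\in I}\IZ\map_G(G/H,G/U_i)$ and $H^{\sing}_0(X_0^H)\cong\bigoplus_{j\in J}\IZ(G/V_j)^H=\bigoplus_{j\in J}\IZ\map_G(G/H,G/V_j)$, and these are the vertical isomorphisms of the diagram defining $\gamma$, the $i$-th free summand carrying the generator $\id_{G/U_i}$.

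Next I would identify $\partial_1$ under these isomorphisms with the usual boundary of a relative $1$-complex: by naturality with respect to the characteristic map $(D^1,S^0)\to(X_1^H,X_0^H)$ of a single cell, together with the computation $H^{\sing}_1(D^1,S^0)\xrightarrow{\cong}\widetilde H^{\sing}_0(S^0)$ carrying the standard generator of $D^1=[-1,+1]$ to $[{+}1]-[{-}1]$ (recall $S^0=\{-1,+1\}$), the generator indexed by a point $\sigma\in(G/U_i)^H$ is sent to $[q_i^H(\sigma,{+}1)]-[q_i^H(\sigma,{-}1)]\in H^{\sing}_0(X_0^H)$. Since $\gamma$ is a morphism of free $\IZ\OrGF{G}{\calf}$-modules it is determined by its value at $G/U_i$ on $\id_{G/U_i}$, i.e.\ on the point $eU_i\in(G/U_i)^{U_i}$; for this choice $q_i^{U_i}(eU_i,{\pm}1)=(q_i^1)_{\pm1}(eU_i)$, and, regarded as a point of $X_0^{U_i}=\coprod_{j}(G/V_j)^{U_i}$, it lies in the single summand indexed by $j_{\pm}(i)$ — because a $G$-map out of the transitive $G$-space $G/U_i$ has image in one orbit summand of $X_0=\coprod_j G/V_j$ — and there it represents exactly the $G$-map $(q_i^1)_{\pm1}\colon G/U_i\to G/V_{j_{\pm}(i)}$. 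Hence $\overline{\gamma_{i,j}}$, the $j$-component of $\partial_1(\id_{G/U_i})=[(q_i^1)_{+1}]-[(q_i^1)_{-1}]$ with the two terms sitting in the $j_{+}(i)$- and $j_{-}(i)$-summands respectively, is precisely the three-case expression of the lemma.

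The only point that needs genuine care — the modest ``hard part'' — is the orientation bookkeeping: one must fix the identification~\eqref{computing_C_n_upper_Or(G)} so that $\id_{G/U_i}$ corresponds to the $1$-cell carried by the characteristic map with the standard orientation of $D^1$, which is exactly what makes $\partial_1$ produce $[(q_i^1)_{+1}]-[(q_i^1)_{-1}]$ rather than its negative. With that convention in place everything reduces to the classical cellular-boundary computation (as in~\cite[15.6 on page~339]{Switzer(1975)}) transported through the fixed-point functors $(-)^H$, which is the routine verification the authors leave to the reader.
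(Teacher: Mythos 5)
Your argument is correct and is precisely the elementary cellular-boundary computation that the paper omits ("We leave the elementary proof of the next lemma to the reader"): pass to $H$-fixed points of the pushout, identify the relative $1$-cells, apply $\partial[D^1]=[{+}1]-[{-}1]$, and evaluate on the generator $\id_{G/U_i}=eU_i$. The orientation caveat you flag is the right one, and the three-case splitting according to whether $j_-(i)$ and $j_+(i)$ coincide falls out exactly as in the statement.
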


\begin{remark}\label{rem:computation_of_bredon_chain-complex}
     This implies for the $\IZ$-chain complex
     $C_*^{\OrGF{G}{\calf}}(X,A) \otimes_{\OrGF{G}{\calf}} M$ for a covariant $\IZ\OrGF{G}{\calf}$-module $M$ that its first
     differential agrees with  the $\IZ$-homomorphism
     \[
     \alpha = (\alpha_{i,j})_{i\in I, j \in J} \colon \bigoplus_{i \in I} M(G/U_i) \to \bigoplus_{j \in J} M(G/V_j),
     \]
     where the $\IZ$-homomorphisms $\alpha_{i,j} \colon M(G/U_i) \to M(G/V_j)$ are
     given as follows.  We get in the notation of
     Lemma~\ref{lem:value_of_overline(gamma_(i,j)_n_is_1}
       \[
        \alpha_{i,j}  =
        \begin{cases}
          \pm M((q_i^1)_{\pm})
          &
          \text{if} \; j = j_{\pm}(i) \; \text{and}\; j_{-}(i) \not= j_{+}(i);
         \\
         M((q_i^1)_{+1}) - M((q_i^1)_{-1})
         &
         \text{if} \; j = j_{-}(i) = j_{+}(i);
         \\
         0 & \text{if}\; j \notin \{ j_{-}(i), j_{+}(i)\}.
       \end{cases}
     \]
      Note  that the cokernel of $\alpha$ is $B\!H_0^{G,\calf}(X;M)$.

      We get a computation of the first differential of
      $C_*^{\SubGF{G}{\calf}}(X,A) \otimes_{\IZ\SubGF{G}{\calf}} N$ for a covariant $\IZ\SubGF{G}{\calf}$-module $N$
       from the isomorphism~\eqref{Adjunction_between_chain_complexs_over_or_and_sub}.
     Explicitly the first differential is given by
     \[
     \beta = (\beta _{i,j})_{i\in I, j \in J} \colon \bigoplus_{i \in I_n} N(U_i) \to \bigoplus_{j \in I_{n-1}} N(V_j),
     \]
     where the $\IZ$-homomorphisms $\beta_{i,j} \colon N(G/U_i) \to N(G/V_j)$ are
     given as follows. Choose for the map $(q_i)_{\pm} \colon G/U_i \to G/V_j$ an element $(g_i)_{\pm}$ with
     $(q_i)_{\pm}(eU_i) = (g_i)_{\pm}^{-1}V_j$. Let $[c(g_i)_{\pm}] \colon U_i \to V_j$ be the morphism in
     $\SubGF{G}{\calf}$ represented by $c(g_i)_{\pm} \colon U_i \to V_j$ sending $u$ to $gug^{-1}$. Then
       \[
        \beta_{i,j}  =
        \begin{cases}
          \pm N([c(g_i)_{\pm}])
          &
          \text{if} \; j = j_{\pm}(i) \; \text{and}\; j_{-}(i) \not= j_{+}(i);
         \\
         N([c(g_i)_{+}]) - N([c(g_i)_{-}])
         &
         \text{if} \; j = j_{-}(i) = j_{+}(i);
         \\
         0 & \text{if}\; j \notin \{ j_{-}(i), j_{+}(i)\}.
       \end{cases}
     \]
      Note  that the cokernel of $\beta $ is $S\!H_0^{G,\calf}(X;N)$.       
     \end{remark}


\section{A brief review of the Farrell Jones Conjecture for the algebraic $K$-theory of Hecke algebras}%
\label{sec:A_brief_review_of_the_farrell_Jones_Conjecture_for_the_algebraic_K-theory_of_Hecke_categories}

\typeout{------ Section 3: A brief review of the Farrell Jones Conjecture for the algebraic $K$-theory of Hecke algebras -----}

In this section we give a review of the Farrell Jones Conjecture for the algebraic
$K$-theory of Heckes algebras.
Further information can be found in~\cite{Bartels-Lueck(2022KtheoryHecke),
  {Bartels-Lueck(2023K-theory_red_p-adic_groups)}}.

Let $R$ be a (not necessarily commutative) associative unital ring with $\IQ \subseteq
R$. Let $G$ be a td-group.  Let $\calh(G;R)$ be the associated Hecke algebra.

 One can construct a covariant functor
\[
  \bfK_{R} \colon \OrGF{G}{\OPEN}  \to  \Spectra;
 \]
such that $\pi_n(\bfK_{R}(Q'/U')) \cong   K_n(\calh(U;R))$ 
holds for any $n \in \IZ$ and open subgroup $U \subseteq Q$.
Associated to it is a smooth $G$-homology theory $H^G_*(-;\bfK_R)$
such that
\begin{equation}
  H^G_n(G/U;\bfK_R) \cong   K_n(\calh(U;R))
  \label{H_upper_G_n(G/U;bfK_R)_cong_K_n(calh(U;R))}
\end{equation}
holds for every $n \in \IZ$ and every open subgroup $U \subseteq Q$.

The next result follows from~\cite[Corollary~1.8]{Bartels-Lueck(2023K-theory_red_p-adic_groups)}.

\begin{theorem}\label{the:FJC_for_Hecke_algebras}
  Let $G$ be a td-group which is modulo a normal compact subgroup a subgroup
  of a reductive $p$-adic group.
  Let $R$ be a uniformly regular ring with $\IQ \subseteq R$.

  Then the map induced by the projection $\EGF{G}{\COP} \to G/G$ induces
  for every $n \in \IZ$ an isomorphism
  \[
  H^G_n(\EGF{G}{\COP};\bfK_R) \xrightarrow{\cong} H^G_n(G/G;\bfK_R) = K_n(\calh(G;R)).
  \]
\end{theorem}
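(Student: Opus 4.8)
The plan is to recognize the assertion as a restatement, in the notation of this paper, of the Farrell--Jones Conjecture for the algebraic $K$-theory of Hecke algebras of reductive $p$-adic groups, which is exactly \cite[Corollary~1.8]{Bartels-Lueck(2023K-theory_red_p-adic_groups)}. So the first task is one of translation: one checks that the covariant functor $\bfK_{R} \colon \OrGF{G}{\OPEN} \to \Spectra$ recalled above, the associated smooth $G$-homology theory $H^G_*(-;\bfK_R)$, and the chosen model $\EGF{G}{\COP}$ for the classifying space of proper smooth $G$-actions agree with the corresponding objects in that corollary. Once this is settled, the isomorphism $H^G_n(\EGF{G}{\COP};\bfK_R) \xrightarrow{\cong} H^G_n(G/G;\bfK_R)$ is precisely what the corollary asserts, and the identification $H^G_n(G/G;\bfK_R) \cong K_n(\calh(G;R))$ is \eqref{H_upper_G_n(G/U;bfK_R)_cong_K_n(calh(U;R))} applied with $U = G$.

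Second, I would account for the two layers of generality in the hypothesis beyond a reductive $p$-adic group itself. If $G$ sits as a (not necessarily open) subgroup of a reductive $p$-adic group, one uses that the conclusion is inherited under passage to subgroups; if $G$ has a normal compact subgroup $K$ with $G/K$ embedded in a reductive $p$-adic group, one reduces to $G/K$, using that a model for $\EGF{G/K}{\COP}$, regarded as a $G$-$CW$-complex via $G \to G/K$, is a model for $\EGF{G}{\COP}$ (here one uses that $K$ is compact, so that compact open subgroups of $G$ containing $K$ correspond to compact open subgroups of $G/K$), together with the compatibility of the $K$-theory spectra of $\calh(G;R)$ and $\calh(G/K;R)$. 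Both reductions are part of what \cite[Corollary~1.8]{Bartels-Lueck(2023K-theory_red_p-adic_groups)} already provides in the stated generality, so in the write-up this step is again a citation.

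The substantive input being imported --- and the step that would be the main obstacle were one to prove the theorem from scratch --- is the verification in \cite{Bartels-Lueck(2023K-theory_red_p-adic_groups)} that the assembly map relative to the family $\COP$ of compact open subgroups is an isomorphism. This rests on the controlled algebra and flow-space machinery for the Farrell--Jones Conjecture carried out over the (extended) Bruhat--Tits building, and it is there that the hypotheses that $R$ is uniformly regular and that $\IQ \subseteq R$ are used: they ensure that no lower $K$-theory or Nil-type contributions arise, so that the family need not be enlarged beyond $\COP$ (in particular one avoids passing to a $\VCYC$-type family). Given that this input is at our disposal, the proof of the present theorem reduces to the bookkeeping described above.
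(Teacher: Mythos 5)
Your proposal matches the paper exactly: the paper offers no independent argument for this theorem, stating only that it "follows from \cite[Corollary~1.8]{Bartels-Lueck(2023K-theory_red_p-adic_groups)}," with the identification $H^G_n(G/G;\bfK_R)\cong K_n(\calh(G;R))$ coming from \eqref{H_upper_G_n(G/U;bfK_R)_cong_K_n(calh(U;R))}. Your additional remarks on the inheritance reductions and on where uniform regularity and $\IQ\subseteq R$ enter are accurate context but, as you note, are already packaged into the cited corollary, so nothing further is needed.
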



\section{Proof of the Main Theorem~\ref{the:Main_Theorem}}%
\label{sec:Proof_of_the_Main_Theorem_ref(the:Main_Theorem)}

\typeout{-------------------- Section 4: Proof of the Main Theorem~\ref{the:Main_Theorem} --------------------}
 
\begin{proof}[Proof of Theorem~\ref{the:Main_Theorem}]~\ref{the:Main_Theorem:assembly} 
 This is exactly Theorem~\ref{the:FJC_for_Hecke_algebras}. 
\\[1mm]~\ref{the:Main_Theorem:spectral_sequence}
Since an open group homomorphism $U \to V$ between two td-groups induces a ring
 homomorphism $\calh(U;R) \to \calh(V;R)$ between the Hecke algebras and hence a
 homomorphism $K_n(\calh(U;R)) \to K_n(\calh(V;R))$ and inner automorphisms of a td-group
 $U$ induce the identity on $K_n(\calh(U;R))$, we get a covariant
 $\IZ\SubGF{G}{\COM}$-module $K_n(\calh(?;R))$ whose value at $U$ is
 $K_n(\calh(U;R))$. Since the
 isomorphism~\eqref{H_upper_G_n(G/U;bfK_R)_cong_K_n(calh(U;R))} is natural, we get an
 isomorphisms of covariant $\IZ\OrGF{G}{\OPEN}$-modules
 \[
   P^*K_n(\calh(?;R)) \xrightarrow{\cong} \pi_n(\bfK_{R})
 \]
 for the projection $P \colon \OrGF{G}{\OPEN} \to \SubGF{G}{\OPEN}$
 of~\eqref{projection_Or(G)_to_Sub(G)}.  So the smooth equivariant Atiyah-Hirzebruch
 spectral sequence applied to the smooth homology theory $H_*^G(-;\bfK_{R})$ takes for a
 $\calf$-$G$-$CW$-complexes $X$ the form
 \begin{equation}
   E_{p,q}^2 = S\!H^{G,\calf}_q\bigl(X;K_q(\calh(?;R))\bigr)
   \implies H^G_{p+q}(X;\bfK_{R}).
   \label{equivariant_Atiyah-Hirzebruch_spectral_sequence_for_bfK_R}
 \end{equation}
 Now assertion~\ref{the:Main_Theorem:spectral_sequence} follows from the special case
 $X = \EGF{G}{\COP}$ and assertion~\ref{the:Main_Theorem:assembly}.
 \\[1mm]~\ref{the:Main_Theorem:K_0} and~\ref{the:Main_Theorem:negative}
 As $K_q(\calh(K;R))$ vanishes for every compact td-group $K$ and every $q \le -1$,
 see~\cite[Lemma~8.1]{Bartels-Lueck(2022KtheoryHecke)},
 assertions~\ref{the:Main_Theorem:K_0}, and~\ref{the:Main_Theorem:negative}
 follow from Theorem~\ref{the:connective_case} applied in
 the case $X = \EGF{G}{\COP}$ and from assertion~\ref{the:Main_Theorem:assembly}. This
 finishes the proof of the Main Theorem~\ref{the:Main_Theorem}.
\end{proof}


\section{The main recipe for the computation of the projective class group}%
\label{sec:The_main_recipe_for_the_computation_of_the_projective_class_group}

\typeout{-- Section 5 : The main recipe for the computation of the projective class group  --}

Throughout this section $G$ will be a td-group and $R$ be a uniformly regular
ring with $\IQ \subseteq R$, e.g., a field of characteristic zero.
We will assume that the assembly map
$H_n^G(\EGF{G}{\COP};\bfK_R) \to H_n^G(G/G;\bfK_R) = K_n(\calh(G;R))$ is bijective for all $n \in \IZ$
This is known to be true for subgroups of reductive $p$-adic groups
by Theorem~\ref{the:FJC_for_Hecke_algebras}.


\subsection{The general case}\label{subsec:The_general_case}

Let $X$ be an abstract simplicial complex with a simplicial $G$-action such that all
isotropy groups are compact open, the $G$-action is cellular, and
$|X|^K$ is non-empty and connected  for every compact open subgroup $K$ of $G$.
 
We can choose a subset $V$ of the set of vertices of $X$
such that the $G$-orbit
through any vertex in $X$ meets $V$ in precisely one element.  Fix a total ordering on $V$. Let $E$ be
the subset of $V \times V$ consisting  of those pairs $(v,w)$ such that  $v \le   w$ holds
and  there exists
$g \in G$ for which $v$ and $gw$ satisfy $v \not= gw$ and span an edge $[v,gw]$ in $X$.  For $(v,w) \in E$ define
$\overline{F(v,w)}$ to be the subset of $G_{v}\backslash G /G_{w}$ consisting of elements
$x$ for which $v$ and $gw$ satisfy $v \not= gw$ and span an edge $[v,gw]$ in $X$ for
some (and hence all) representative $g$ of $x$. Choose a subset $F(v,w)$ of $G$ such that
the projection $G \to G_{v}\backslash G /G_{w}$ induces a bijection
$F(v,w) \to \overline{F(v,w)}$.

Then for every edge of $X$ the $G$-orbit through it meets the set
$\{[v, gv] \mid (v,w) \in E, g \in F(v,w)\}$ in precisely one element. Moreover, the
$0$-skeleton of $|X|$ is given by $|X|_0 = \coprod_{u \in V} G/G_{u}$ and $|X|_1$ is
given by the $G$-pushout
\[
  \xymatrix@!C=19em{\coprod_{(v,w) \in E} \coprod_{g \in F(v,w)} G/(G_{v} \cap G_{gw})\times S^0
    \ar[r]^-{\coprod_{(v,w) \in E} \coprod_{g \in F(v,w)} q_{(v,w),g}} \ar[d]
    &
    |X|_0 \ar[d]
    \\
    \coprod_{(v,w) \in E} \coprod_{g \in F(v,w)} G/(G_{v} \cap G_{gw}) \times D^1
    \ar[r]
    &
    |X|_1
  }
\]
where
$q_{(v,w),g} \colon G/(G_{v} \cap G_{gw}) \times S^0 \to |X|_0 = \coprod_{u \in V}G/G_{u}$
is defined as follows. Write $S^0 = \{-1,1\}$. The restriction of $q_{(v,w),g}$
to $G/(G_{v} \cap G_{gw}) \times \{-1\}$ lands in the summand $G/G_{v}$ and is given
by canoncial projection.  The restriction of $q_{(v,w),g}$ to
$G/(G_{v} \cap G_{gw}) \times \{1\}$ lands in the summand $G/G_{w}$ and is given by the $G$-map
$R_{g^{-1}} \colon G/(G_{v} \cap G_{gw}) \to G/G_{w}$ sending
$z(G_{v} \cap G_{gw})$ to $zgG_w$.

Next we define a map
\[
\beta = (\beta_{(v,w),g,u}) \colon \bigoplus_{(v,w) \in E} \bigoplus_{g \in F(v,w)} K_0(\calh(G_v \cap G_{gw};R))
\to \bigoplus_{u \in V} K_0(\calh(G_{u};R)).
\]
If $u = v $, then
$\beta_{(v,w),g,v} \colon K_0(\calh(G_{v} \cap G_{gw};R)) \to K_0(\calh(G_{v};R))$
is the map induced by the inclusion $G_{v} \cap G_{gw} \to G_{v}$ multiplied
with $(-1)$.  If $u  = w$, then $\beta_{(v,w),g,w}$ 
$K_0(\calh(G_v\cap G_{gw};R))\to K_0(\calh(G_{w};R))$ is the map induced by the group
homomorphism $ G_v \cap G_{gw} \to G_{w}$ sending $z$ to $g^{-1}zg$.  If
$u \notin \{v,w\}$, then $\beta_{(v,w),g,u} = 0$. 

\begin{lemma}\label{lem:cokernel_of_beta_is_K_0(calh(G;R)}
  The cokernel of $\beta$ is isomorphic to $K_0(\calh(G;R))$.
\end{lemma}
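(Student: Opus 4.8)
The plan is to identify $\beta$ with the degree-one differential of a Bredon chain complex, so that $\coker(\beta)$ becomes a Bredon homology group $B\!H_0$, and then to evaluate this group by means of the smooth equivariant Atiyah--Hirzebruch spectral sequence of Theorem~\ref{the:equivariant_Atyiah-Hirzebruch_spectral_sequence_smooth} together with the assumed assembly isomorphism. First I would fix the coefficients: let $\overline M$ be the covariant $\IZ\SubGF{G}{\COP}$-module with $\overline M(H) = K_0(\calh(H;R))$ and let $M = \overline M \circ P$ be the associated covariant $\IZ\OrGF{G}{\COP}$-module. As recorded in the proof of Theorem~\ref{the:Main_Theorem}~\ref{the:Main_Theorem:spectral_sequence} and in Remark~\ref{rem:passage_to_Sub}, these are well defined (an open group homomorphism induces a ring homomorphism of Hecke algebras, inner automorphisms act trivially on $K_0$) and $M$ satisfies Condition~\ref{con:(Sub)} for $\calf = \COP$. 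Let $\cali \subseteq \COP$ be the set of isotropy groups of $|X|$. The $G$-pushout for $|X|_1$ displayed just before the statement is of exactly the form treated in Remark~\ref{rem:computation_of_bredon_chain-complex}, with $I = \{(v,w,g) \mid (v,w) \in E,\ g \in F(v,w)\}$, $J = V$, attaching data indexed by $G/(G_v \cap G_{gw}) \times S^0$, and $j_-(v,w,g) = v$, $j_+(v,w,g) = w$. Reading the attaching maps $q_{(v,w),g}$ off the text, the elements $(g_i)_-$ and $(g_i)_+$ of Remark~\ref{rem:computation_of_bredon_chain-complex} may be taken to be $e$ and $g^{-1}$, so the morphisms they represent in $\SubGF{G}{\cali}$ are the class of the inclusion $G_v \cap G_{gw} \hookrightarrow G_v$ and the class of $z \mapsto g^{-1} z g \colon G_v \cap G_{gw} \to G_w$; comparison with the formula in Remark~\ref{rem:computation_of_bredon_chain-complex} (including the degenerate case $v = w$, where both contributions land in $K_0(\calh(G_v;R))$ and combine as in its second line) shows that $\beta$ is precisely the first differential of $C_*^{\SubGF{G}{\cali}}(|X|) \otimes_{\IZ\SubGF{G}{\cali}} \overline M$. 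Hence, using Remark~\ref{rem:computation_of_bredon_chain-complex}, the adjunction \eqref{Adjunction_between_BH_and_SH} and the change-of-family isomorphism \eqref{BH_and_enlarging_family},
\[
\coker(\beta) \;\cong\; S\!H_0^{G,\cali}(|X|;\overline M) \;\cong\; B\!H_0^{G,\cali}(|X|;M) \;\cong\; B\!H_0^{G,\COP}(|X|;M).
\]

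Next I would compute $B\!H_0^{G,\COP}(|X|;M)$. For every compact open subgroup $K \subseteq G$ the fixed point set $|X|^K$ is a non-empty connected $CW$-complex, so $H_0\bigl(C_*^{\OrGF{G}{\COP}}(|X|)(G/K)\bigr) = H_0^{\sing}(|X|^K) = \IZ$; this is natural in $G/K \in \OrGF{G}{\COP}$, giving an exact sequence $C_1^{\OrGF{G}{\COP}}(|X|) \to C_0^{\OrGF{G}{\COP}}(|X|) \to \underline{\IZ} \to 0$ of contravariant $\IZ\OrGF{G}{\COP}$-modules, $\underline{\IZ}$ being the constant module with value $\IZ$. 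Applying the right exact functor $- \otimes_{\IZ\OrGF{G}{\COP}} M$, see~\cite[9.23 on page~169]{Lueck(1989)}, gives $B\!H_0^{G,\COP}(|X|;M) \cong \underline{\IZ} \otimes_{\IZ\OrGF{G}{\COP}} M$. The identical argument applies to any model for $\EGF{G}{\COP}$, whose $K$-fixed point sets are even contractible, and therefore $B\!H_0^{G,\COP}(|X|;M) \cong B\!H_0^{G,\COP}(\EGF{G}{\COP};M)$.

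Finally I would bring in the connective case. Since $K_q(\calh(K;R)) = 0$ for every compact td-group $K$ and $q \le -1$ by~\cite[Lemma~8.1]{Bartels-Lueck(2022KtheoryHecke)}, the smooth equivariant Atiyah--Hirzebruch spectral sequence of Theorem~\ref{the:equivariant_Atyiah-Hirzebruch_spectral_sequence_smooth} for $H_*^G(-;\bfK_R)$ has $E^2_{p,q} = 0$ for $q < 0$, so its edge homomorphism in total degree $0$ is an isomorphism $B\!H_0^{G,\COP}(\EGF{G}{\COP};M) \xrightarrow{\cong} H_0^G(\EGF{G}{\COP};\bfK_R)$; this is contained in Theorem~\ref{the:connective_case}~\ref{the:connective_case:q_is_0_Or}. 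The assumed assembly isomorphism of Theorem~\ref{the:FJC_for_Hecke_algebras} identifies the target with $H_0^G(G/G;\bfK_R) = K_0(\calh(G;R))$. Concatenating the isomorphisms of the last two paragraphs yields $\coker(\beta) \cong K_0(\calh(G;R))$. The step that will require the most care is the identification in the first paragraph --- matching the concrete attaching maps $q_{(v,w),g}$ of the $1$-skeleton with the abstract description of the first differential in Remark~\ref{rem:computation_of_bredon_chain-complex}, keeping track of the signs and of the possibility $v = w$; once that is done, the rest is formal homological algebra applied to the already established spectral sequence.
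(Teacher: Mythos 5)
Your proposal is correct and follows essentially the same route as the paper: identify the cokernel of $\beta$ with $S\!H_0^{G,\COP}(X;\overline{K_0(\calh(?;R))})$ via Remark~\ref{rem:computation_of_bredon_chain-complex}, compare with $\EGF{G}{\COP}$, and conclude via Theorem~\ref{the:connective_case} and the assumed assembly isomorphism. The only minor difference is in the comparison step: the paper observes that the canonical map $f^K\colon |X|^K \to \EGF{G}{\COP}^K$ is $1$-connected for every compact open $K$ and cites~\cite[Proposition~23~(iii) on page~35]{Lueck(1989)}, whereas you compute both $B\!H_0$-groups directly as $\underline{\IZ}\otimes_{\IZ\OrGF{G}{\COP}} M$ using right-exactness of the tensor product, which amounts to the same thing.
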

\begin{proof}
  We conclude from Remark~\ref{rem:computation_of_bredon_chain-complex}  that the
  cokernel of $\beta$ is $S\!H_0^{G,\COP}(X;\overline{K_0^G(-)})$.  The up to $G$-homotopy
  unique $G$-map $f \colon X \to \EGF{G}{\COP}$ induces for every compact open subgroup
  $K \subset G$ a $1$-connected map $f^{K} \colon |X|^K \to \EGF{G}{\COP}^K$. This implies
  that the map
  $S\!H_0^{G,\COP}(X;\overline{K_0^G(-)}) \to
  S\!H_0^{G;\COP}(\EGF{G}{\COP};\overline{K_0^G(-)})$ induced by $f$ is an isomorphism,
  see~\cite[Proposition~23~(iii) on page~35]{Lueck(1989)}.
  Theorem~\ref{the:connective_case}~\ref{the:connective_case:q_is_0_Sub} implies
  $S\!H_0^G(\EGF{G}{\COP};\overline{K_0^G(-)}) \cong H_0^G(\EGF{G}{\COP};\bfK_R)$.
  Since by  
  assumption we have $H_0^G(\EGF{G}{\COP};\bfK_R) \cong K_0(\calh(G;R))$,
  Lemma~\ref{lem:cokernel_of_beta_is_K_0(calh(G;R)} follows.
\end{proof}

\begin{remark}\label{rem:lem:cokernel_of_beta_is_K_0(calh(G;R)_strict_fundamental_domain}
  Suppose additionally that $X$ possesses a strict fundamental domain $\Delta$, i.e., a
  simplicial subcomplex $\Delta$ that contains exactly one simplex from each orbit for the
  $G$-action on the set of simplices of $X$.  Then one can take $V$ to be the set of
  vertices of $\Delta$ and for $(v,w) \in E$ the set $F(v,w)$ to be $\{e\}$. Moreover,
  $\beta$ reduces to the map
  \[
    \beta = (\beta_{(v,w,u)}) \colon \bigoplus_{(v,w) \in E} K_0(\calh(G_v \cap G_{w};R))
    \to \bigoplus_{u \in V} K_0(\calh(G_{u};R)).
  \]
  where $\beta_{(v,w),u}$ is the map induced by the inclusion $G_{v} \cap G_{w} \to G_{v}$
  multiplied with $(-1)$ for $u = v$, the map induced by the inclusion
  $G_{v} \cap G_{w} \to G_{w}$ for $u = w$, and zero for $u \notin \{v,w\}$. Note that $E$
  is the subset  of $V \times V$ consisting of elements $(v,w)$ for which $v < w$ holds
  and $v$ and $w$ span an edge $[v,w]$ in $\Delta$.
\end{remark}


\subsection{A variation}\label{subsec:A_variation}

Consider a central extension
$1 \to \widetilde{C} \to \widetilde{G} \xrightarrow{\pr} G \to 1$ of td-groups together
with a group homomorphism $\mu \colon \widetilde{G} \to \IZ$ 
such that
$\widetilde{C} \cap \widetilde{M}$ is compact for $\widetilde{M}: = \ker(\mu)$.  We still
consider the abstract simplicial complex $X$  of Subsection~\ref{subsec:The_general_case}
coming with a simplicial $G$-action such that all isotropy groups are compact open, and
$|X|^K$ is non-empty  and connected for every compact open subgroup $K$ of $G$. Furthermore, we will
assume that the assembly map
$H_n^{\widetilde{G}}(\EGF{\widetilde{G}}{\COP};\bfK_R) \to
H_n^{\widetilde{G}}(\widetilde{G}/\widetilde{G};\bfK_R) = K_0(\calh(\widetilde{G};R))$ is
bijective for all $n \in \IZ$.

If $\widetilde{C}$ is compact, then we can consider $X$ as a $\widetilde{G}$-$CW$-complex
by restricting the $G$-action with $\pr$ and Subsection~\ref{subsec:The_general_case}
applies. Hence we will assume that $\widetilde{C}$ is not compact, or, equivalently, that
$\widetilde{C}$ is not contained in the kernel $\widetilde{M}: = \ker(\mu)$.  Then the
index $m := [\IZ : \mu(C)]$ is a natural number $m \ge 1$.  We fix an element
$\widetilde{c} \in \widetilde{C}$ with $\mu(\widetilde{c}) = m$.  In sequel we choose for
every $g \in G$ an element $\widetilde{g}$ in $\widetilde{G}$ satisfying
$\pr(\widetilde{g}) = g$ and denote for an open subgroup $U \subseteq G$ by
$\widetilde{U} \subseteq \widetilde{G}$ its preimage under
$\pr \colon \widetilde{G} \to G$.  Let
\[
\gamma \colon 
      \bigoplus_{(v,w) \in E}  \bigoplus_{g \in F(v,w)}
      K_0(\calh(\widetilde{G_{v}} \cap \widetilde{G_{gw}} \cap \widetilde{M};R))
      \to      \bigoplus_{u \in V} K_0(\calh(\widetilde{G_{u}}\cap \widetilde{M};R))
 \]
be the map whose component for $(v,w) \in E$, $g \in F(v,w)$, and $u \in V$ 
is the  map 
\begin{equation}
    \gamma_{(v,w),g,u}\colon  K_0(\calh(\widetilde{G_{v}} \cap \widetilde{G_{gw}} \cap \widetilde{M};R))
    \to  K_0(\calh(\widetilde{G_{u}}\cap \widetilde{M};R))
    \label{gamma_((v,w),g,u)}
  \end{equation}
  defined next. If $u = v$, it is the map
  coming from the inclusion
  $\widetilde{G_{v}} \cap \widetilde{G_{gw}} \cap \widetilde{M}
  \to \widetilde{G_{v}} \cap \widetilde{M}$
  multiplied with $(-1)$.
  If $u = w$, it  is the map coming from the group homomorphism
  $\widetilde{G_{v}} \cap \widetilde{G_{gw}} \cap \widetilde{M} \to \widetilde{G_{w}} \cap \widetilde{M}$
  sending $x$ to $\widetilde{g} x\widetilde{g}^{-1}$.
  If $u \not \in \{v,w\}$, it  is trivial. Note that this definition  is independent of the choice of
  $\widetilde{g} \in \widetilde{G}$ satisfying $\pr(\widetilde{g}) = g$ for $g \in F(v,w)$.

\begin{lemma}\label{lem:cokernel_of_beta_is_K_0(calh(widetilde(G);R)}
  The cokernel of $\gamma$ is $K_0(\calh(\widetilde{G};R))$.
\end{lemma}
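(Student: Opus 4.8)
The plan is to realize the data defining $\gamma$ as the first differential of the Bredon chain complex of an explicit $\widetilde{G}$-$CW$-model for the classifying space of smooth proper $\widetilde{G}$-actions, and then to repeat the argument of Lemma~\ref{lem:cokernel_of_beta_is_K_0(calh(G;R)} essentially verbatim. Concretely, I would set $Y := |X| \times \IR$, where $\widetilde{G}$ acts diagonally: on $|X|$ through $\pr \colon \widetilde{G} \to G$ and the given simplicial $G$-action, and on $\IR$ by $\widetilde{g} \cdot t = t + \mu(\widetilde{g})$, with $\IR$ carrying the $CW$-structure whose $0$-skeleton is $\IZ$. First I would check that $Y$ is a proper smooth $\widetilde{G}$-$CW$-complex: the isotropy group of $(x,t)$ is $\widetilde{G_x} \cap \widetilde{M}$ (writing $\widetilde{U} := \pr^{-1}(U)$ and $\widetilde{M} := \ker(\mu)$), which is open in $\widetilde{G}$ and is compact because it is an extension of the compact open group $G_x$ — note $G_x \subseteq \pr(\widetilde{M})$, since $G_x$ is compact — by the compact group $\widetilde{C} \cap \widetilde{M}$. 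Moreover, any compact open $\widetilde{K} \subseteq \widetilde{G}$ lies in $\widetilde{M}$, so $Y^{\widetilde{K}} = |X|^{\pr(\widetilde{K})} \times \IR$ with $\pr(\widetilde{K})$ compact open in $G$; by hypothesis $|X|^{\pr(\widetilde{K})}$ is non-empty and connected, hence so is $Y^{\widetilde{K}}$. (If $X$ were in addition a model for $\EGF{G}{\COP}$, then $Y$ would be a model for $\EGF{\widetilde{G}}{\COP}$; in general we only get connectedness of the fixed points, exactly as in the general case.)

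Next I would identify $K_0(\calh(\widetilde{G};R))$ with $S\!H_0$ of $Y$. The $\widetilde{G}$-map $f \colon Y \to \EGF{\widetilde{G}}{\COP}$, unique up to $\widetilde{G}$-homotopy, restricts on $\widetilde{K}$-fixed points to a $1$-connected map for every compact open $\widetilde{K}$ (source connected, target weakly contractible), hence induces an isomorphism on $S\!H_0^{\widetilde{G},\COP}(-;\overline{K_0(\calh(?;R))})$ by \cite[Proposition~23~(iii) on page~35]{Lueck(1989)}. Combining this with Theorem~\ref{the:connective_case}~\ref{the:connective_case:q_is_0_Sub}, the vanishing of $K_q(\calh(K;R))$ for compact td-groups $K$ and $q \le -1$ of \cite[Lemma~8.1]{Bartels-Lueck(2022KtheoryHecke)}, the fact that the Hecke $K$-theory homology theory satisfies Condition~\ref{con:(Sub)} (inner automorphisms act trivially on $K_*$ of a Hecke algebra), and the standing assumption that the assembly map for $\widetilde{G}$ is an isomorphism, one gets $S\!H_0^{\widetilde{G},\COP}(Y;\overline{K_0(\calh(?;R))}) \cong K_0(\calh(\widetilde{G};R))$.

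It then remains to compute $S\!H_0^{\widetilde{G},\COP}(Y;\overline{K_0(\calh(?;R))})$, i.e. the cokernel of the first differential of the Bredon chain complex, and to identify it with $\cok(\gamma)$. The first input is that $\mu(\widetilde{G_u}) = m\IZ$ for every $u \in V$ and $\mu(\widetilde{G_{v}} \cap \widetilde{G_{gw}}) = m\IZ$ for every $(v,w) \in E$, $g \in F(v,w)$; this follows because $\widetilde{C} \cdot (\widetilde{G_u} \cap \widetilde{M}) = \widetilde{G_u}$ — both sides have image $G_u$ under $\pr$ and contain $\ker(\pr) = \widetilde{C}$ — and similarly for $G_v \cap G_{gw}$. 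Hence the $\widetilde{G}$-$CW$-structure on $Y$ has, through degree $1$: over each $u \in V$, exactly $m$ orbits of $0$-cells $\widetilde{G}/(\widetilde{G_u} \cap \widetilde{M})$, which I index by the $\IZ$-height modulo $m$; over each $u \in V$, exactly $m$ ``vertical'' $1$-cell orbits $\widetilde{G}/(\widetilde{G_u} \cap \widetilde{M})$ arising from $|X|_0 \times (\text{edges of } \IR)$; and over each $((v,w),g)$, exactly $m$ ``horizontal'' $1$-cell orbits $\widetilde{G}/(\widetilde{G_{v}} \cap \widetilde{G_{gw}} \cap \widetilde{M})$ arising from $|X|_1 \times \IZ$. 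Using Remark~\ref{rem:computation_of_bredon_chain-complex}, the vertical cell over $u$ at height $i$ connects the $0$-cell orbits of heights $i$ and $i+1$ over $u$ by identity morphisms of $\widetilde{G_u} \cap \widetilde{M}$ (the closing-up from height $m-1$ to $m \equiv 0$ uses conjugation by $\widetilde{c}$, which is the identity since $\widetilde{C}$ is central); so for fixed $u$ the cokernel of this cyclic ``difference'' map on $\bigoplus_{i \in \IZ/m} K_0(\calh(\widetilde{G_u} \cap \widetilde{M};R))$ is the single copy $K_0(\calh(\widetilde{G_u} \cap \widetilde{M};R))$. Modulo this collapse, the horizontal cell over $((v,w),g)$ at height $i$ contributes, independently of $i$, minus the inclusion-induced map to the $v$-copy and plus the map induced by conjugation with the lift $\widetilde{g}$ to the $w$-copy (independence of $i$ and of the choice of $\widetilde{g}$ again using centrality of $\widetilde{C}$) — i.e. exactly the component $\gamma_{(v,w),g,-}$. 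Therefore the cokernel of the first differential is $\cok(\gamma)$, which would complete the proof.

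The part I expect to be the main obstacle is the bookkeeping in the last paragraph: counting the $\widetilde{G}$-orbits of the cells of $|X| \times \IR$ lying over a fixed cell of $X$ — the ``$m$-fold'' multiplicity, governed by $\mu(\widetilde{G_?}) = m\IZ$ — and verifying that, after killing the vertical cells, one recovers precisely the maps in the definition of $\gamma$, in particular matching the conjugation by $\widetilde{g}$ with the (convention-dependent) formula in the statement.
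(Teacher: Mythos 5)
Your proposal reproduces the paper's proof of this lemma almost step for step: the paper likewise takes $|X|\times\IR$ with the action transported through $\iota=\pr\times\mu$ (which is exactly your diagonal action), identifies $S\!H_0^{\widetilde{G},\COP}$ of this space with $K_0(\calh(\widetilde{G};R))$ by the same $1$-connectedness argument as in Lemma~\ref{lem:cokernel_of_beta_is_K_0(calh(G;R)}, decomposes each orbit $G/U\times\IZ$ into copies of $\widetilde{G}/(\widetilde{U}\cap\widetilde{M})$, and then collapses the vertical $1$-cells using the exact sequence $A^m\xrightarrow{\pi-\id}A^m\to A\to 0$ to recover $\gamma$. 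So the architecture is the intended one, and you correctly singled out the delicate point.

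However, the justification you offer for the multiplicity count is circular, and the claim itself fails in general. You assert $\mu(\widetilde{G_u})=m\IZ$ because ``$\widetilde{C}\cdot(\widetilde{G_u}\cap\widetilde{M})=\widetilde{G_u}$, both sides having image $G_u$ under $\pr$''; but $\pr(\widetilde{G_u}\cap\widetilde{M})=G_u$ is precisely equivalent to the claim being proved. (Similarly, ``$G_x\subseteq\pr(\widetilde{M})$ since $G_x$ is compact'' is unfounded: $\pr(\widetilde{M})$ is an open normal subgroup of \emph{finite} index $m$ in $G$, so a compact subgroup may well surject onto $G/\pr(\widetilde{M})$.) In general one only has $m\IZ=\mu(\widetilde{C})\subseteq\mu(\widetilde{G_u})$, and the number of $\widetilde{G}$-orbits on $\iota^*(G/G_u\times\IZ)$ is $[\IZ:\mu(\widetilde{G_u})]$, which can be a proper divisor of $m$. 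This occurs in the paper's own main application: for $G=\PGL_n(F)$, $\widetilde{G}=\GL_n(F)$, $m=n$, and $u=b_\Delta$ with $G_{b_\Delta}=H I^{\text{P}}$, one has $\mu(\widetilde{HI^{\text{P}}})=\IZ$ because $\mu(\widehat{h})=1$, so there is a single orbit of $0$-cells over $b_\Delta$ rather than $n$ of them. When this happens the wrap-around of the vertical $1$-cells over $u$ is given by conjugation with a non-central element of $\widetilde{G_u}$, the collapse yields coinvariants of $K_0(\calh(\widetilde{G_u}\cap\widetilde{M};R))$ rather than the full group, and identifying the resulting cokernel with $\cok(\gamma)$ requires an additional argument. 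To be fair, the paper's proof asserts the bijectivity of its maps $\alpha(U)$ for \emph{every} open $U$ without addressing this condition either, so you have faithfully mirrored the published argument; but as your proof stands this step is a genuine gap, not mere bookkeeping, and closing it means either verifying $\pr(\widetilde{G_u}\cap\widetilde{M})=G_u$ for the groups actually occurring or reworking the collapse to accommodate the smaller orbit counts.
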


\begin{proof}
  Note that $|X| \times \IR$ carries the $G \times \IZ$-$CW$-complex structure coming
  from the product of the $G$-$CW$-complex structure on $|X|$ and the standard free
  $\IZ$-$CW$-structure on $\IR$.  Since the $\IZ$-CW-complex $\IR$ has precisely one
  equivariant $1$-cell and one equivariant $0$-cell, the set of equivariant $0$-cells of
  the $G \times \IZ$-$CW$-complex $|X| \times \IR$ can be identified with the set
  $V$ and the set of equivariant  $1$-cells can be identified with the
  disjoint union of $V$ and the
  set $\coprod_{(v,w) \in E} F(v,w)$.  Now the $0$-skeleton of $|X| \times \IR$ is
  given by the disjoint union
  $\coprod_{u \in V} \widetilde{G}/\widetilde{G_{u}}\times \IZ$ and the $1$-skeleton
  of $|X| \times \IR$ is given by the $G \times \IZ$-pushout           
\begin{equation}
  \xymatrix@!C=19em{\mbox{$\begin{array}{c}
    \coprod_{v \in V} \widetilde{G}/\widetilde{G_{v}} \times \IZ \times S^0
    \\
     \coprod
     \\
  \coprod_{(v,w) \in E} \coprod_{g \in F(v,w)} \widetilde{G}/(\widetilde{G_{v}} \cap \widetilde{G_{gw}})
  \times \IZ \times S^0
  \end{array}$}     
  \ar[r]^-{\widetilde{q}} \ar[d]
  &
  \coprod_{u \in V} \widetilde{G}/\widetilde{G_{u}} \times \IZ \ar[d]
  \\
  \mbox{$\begin{array}{c}
    \coprod_{v \in V}\widetilde{G}/\widetilde{G_{v}} \times \IZ \times D^1
    \\
     \coprod
     \\
  \coprod_{(v,w) \in E} \coprod_{g \in F(v,w)} \widetilde{G}/(\widetilde{G_{v}} \cap \widetilde{G_{gw}})
  \times \IZ \times S^0
  \end{array}$}     
  \ar[r]
  &
  (|X| \times \IR)_1
  }
  \label{G_times_Z-pushout}
\end{equation}
where $\widetilde{q}$ is given as follows. Write $S^0 = \{-1,1\}$.  Fix
$u \in V$. The restriction of $\widetilde{q}$ to the summand
$\widetilde{G}/\widetilde{G_{v}} \times \IZ \times \{\epsilon\}$ lands in the summand
$\widetilde{G}/\widetilde{G_{v}} \times \IZ$ and is given by $\id$ for $\epsilon = -1$
and by $\id \times \sh_1$ for $\epsilon = 1$, where $\sh_a \colon \IZ \to \IZ$ sends $b$
to $a+b$ for $a,b \in \IZ$.  Fix $(v,w) \in E$ and $g \in F(v,w)$.  The restriction of
$\widetilde{q}$ to the summand
$\widetilde{G}/(\widetilde{G_{v}} \cap \widetilde{G_{gw}}) \times \IZ \times \{-1\}$
belonging to $(v,w)$ and $g$ lands in the summand for $u = v$ and is the canonical
projection
$\widetilde{G}/(\widetilde{G_{v}} \cap \widetilde{G_{gw}}) \times \IZ \to
\widetilde{G}/\widetilde{G_{v}} \times \IZ$.  The restriction of $\widetilde{q}$ to the
summand
$\widetilde{G}/(\widetilde{G_{v}} \cap \widetilde{G_{gw}}) \times \IZ \times \{1\}$
belonging to $(v,w)$ and $g$ lands in the summand for $u = w$ and is the map
$R_{\widetilde{g}^{-1}} \times \id_{\IZ} \colon \widetilde{G}/(\widetilde{G_{v}} \cap
\widetilde{G_{gw}}) \times \IZ \to \widetilde{G}/\widetilde{G_{w}} \times \IZ$, where
$R_{\widetilde{g}^{-1}}$ sends
$\widetilde{z}(\widetilde{G_{v}} \cap \widetilde{G_{gw}})$ to
$\widetilde{z}\widetilde{g}^{-1}\widetilde{G_{w}}$.

We have the  group homomorphism
\[
  \iota := \pr \times \mu \colon \widetilde{G} \to G \times \IZ.
\]
Its kernel is $\widetilde{C} \cap \widetilde{M}$. Its image has finite index in $G \times \IZ$,
which agrees with the index $m$ of the image of $\mu$ in $\IZ$. 

We are interested in the $\widetilde{G}$-$CW$-complex $\iota^* (|X| \times \IR)$
obtained by restriction with $\iota$ from the $G \times \IZ$-$CW$-complex
$|X| \times \IR$.  
So we have to analyze how the $G \times \IZ$-cells in
$\iota^* (|X| \times \IR)$ viewed as $\widetilde{G}$-spaces decompose  as  disjoint union
of $\widetilde{G}$-cells.  Consider any open subgroup $U \subseteq G$. Then we obtain a
$\widetilde{G}$-homeomorphism
\[\alpha(U) \colon \coprod_{p = 0}^{m-1} \widetilde{G}/(\widetilde{U} \cap \widetilde{M})
  \xrightarrow{\cong} \iota^*\bigl(G/U \times \IZ\bigr)
\]
by sending the element $\widetilde{z}(\widetilde{U} \cap \widetilde{M})$ in the $p$-th
summand to  $(\pr(\widetilde{z})U, \mu(\widetilde{z}) + p)$. Next we have to analyze
the naturality properties of $\alpha(U)$. The following diagram commutes for $a \in \IZ$
  \[
   \xymatrix@!C=12em{\coprod_{p = 0}^{m-1} \widetilde{G}/(\widetilde{U} \cap \widetilde{M})
    \ar[r]^-{\alpha(U)} \ar[d]_{\widehat{\pi}^a}
    &
    \iota^*\bigl(G/U \times \IZ\bigr) \ar[d]^{\id \times \sh_a} \\
    \coprod_{p = 0}^{m-1} \widetilde{G}/(\widetilde{U} \cap \widetilde{M})
    \ar[r]_-{\alpha(U)}
    &
    \iota^*\bigl(G/U \times \IZ\bigr)
  }
  \]
  where $\widehat{\pi}$ sends the summand for $p = 0, \ldots, m-2$ by the identity to the
  summand for $p+1$ and sends the summand for $p = m-1$ to the summand for $p = 0$ by the
  map  $R_{\widetilde{c}} \colon \widetilde{G}/(\widetilde{U} \cap \widetilde{M}) \to
  \widetilde{G}/(\widetilde{U} \cap \widetilde{M})$ for $\widetilde{c} \in \widetilde{C}$
  satisfying $\mu(\widetilde{c}) = m$.  Note for the sequel
  that the endomorphism
  $\pi_n(\bfK_R(R_{\widetilde{c}}))$
  of $\pi_n(\bfK_R(\widetilde{G}/\widetilde{U} \cap \widetilde{M}))
  = K_0(\calh(\widetilde{U} \cap \widetilde{M}))$ is the identity,
  since conjugation with $\widetilde{c}$ induces the identity on $\widetilde{U} \cap \widetilde{M}$.

Consider two open subgroups $U$ and $V$ of $G$
and an element $g \in G$ with $gUg^{-1} \subseteq V$.  Then we  get well-defined
$\widetilde{G}$-maps
$R_{\widetilde{g}^{-1}} \colon \widetilde{G}/(\widetilde{U} \cap \widetilde{M}) \to
\widetilde{G}/(\widetilde{V} \cap \widetilde{M})$ sending
$\widetilde{z}(\widetilde{U} \cap \widetilde{M})$ to
$\widetilde{z}\widetilde{g}^{-1}(\widetilde{V} \cap \widetilde{M})$ and
$R_{g^{-1}} \times \id \colon \iota^*\bigl(G/U \times \IZ\bigr)
\to \iota^*\bigl(G/V \times \IZ\bigr)$ sending
$(zU, n)$ to $(zg^{-1}V, n)$
and the following diagram commutes
\[
  \xymatrix@!C=12em{\coprod_{p = 0}^{m-1} \widetilde{G}/(\widetilde{U} \cap \widetilde{M})
    \ar[r]^-{\alpha(U)} _{\cong}\ar[d]_{\coprod_{p = 0}^{m-1} R_{\widetilde{g}^{-1}}}
    &
    \iota^*\bigl(G/U \times \IZ\bigr) \ar[d]^{R_{g^{-1}} \times \sh_{\mu(\widetilde{g}^{-1})}}
    \\
    \coprod_{p = 0}^{m-1} \widetilde{G}/(\widetilde{V} \cap \widetilde{M})
    \ar[r]_-{\alpha(V)}^{\cong}
    &
    \iota^*\bigl(G/V \times \IZ\bigr).
  }
  \]
  In particular the following diagram commutes
  \[
  \xymatrix@!C=12em{\coprod_{p = 0}^{m-1} \widetilde{G}/(\widetilde{U} \cap \widetilde{M})
    \ar[r]^-{\alpha(U)} _{\cong}\ar[d]_{\pi^{\mu(\widetilde{g})}\circ \bigl(\coprod_{p = 0}^{m-1} R_{\widetilde{g}^{-1}}\bigr)}
    &
    \iota^*\bigl(G/U \times \IZ\bigr) \ar[d]^{R_{g^{-1}} \times \id}
    \\
    \coprod_{p = 0}^{m-1} \widetilde{G}/(\widetilde{V} \cap \widetilde{M})
    \ar[r]_-{\alpha(V)}^{\cong}
    &
    \iota^*\bigl(G/V \times \IZ\bigr).
  }
  \]

  Now we obtain from the $G \times \IZ$-pushout~\eqref{G_times_Z-pushout}  by applying restriction with $\iota$
  and the maps $\alpha_U$ above a $\widetilde{G}$-pushout describing how the $1$-skeleton
  of the $\widetilde{G}$-$CW$-complex $\iota^* (|X| \times \IR)$ is obtained from its
  $0$-skeleton and explicite descriptions of the attaching maps.

  In the sequel $A^m$ stands for the $m$-fold direct sum of copies of $A$ for an abelian
  group $A$ and $\pi \colon A^m \to A^m$ denotes the permutation map sending
  $(a_1, a_2, \ldots, a_m)$ to $(a_m,a_1, \ldots, a_{m-1})$ and $\operatorname{aug}  \colon A^m \to A$
  denotes the augmentation map sending $(a_1, \ldots, a_m)$ to $a_1 + \cdots + a_m$.
  
      Let $\delta$ be the map given by the direct sum
      \[\delta = \bigoplus_{v \in V} \delta_v\colon
        \bigoplus_{v \in V} K_0(\calh(\widetilde{G_{v}} \cap \widetilde{M};R))^m
      \to
      \bigoplus_{v \in V} K_0(\calh(\widetilde{G_{v}}\cap \widetilde{M};R))^m
       \]
    where $\delta_v \colon K_0(\calh(\widetilde{G_{v}} \cap \widetilde{M};R))^m
    \to K_0(\calh(\widetilde{G_{v}} \cap \widetilde{M};R))^m$
    is $\pi -\id$. Let
    \[
    \epsilon \colon \bigoplus_{(v,w) \in E}  \bigoplus_{g \in F(v,w)}
    K_0(\calh(\widetilde{G_{v}} \cap \widetilde{G_{gw}} \cap \widetilde{M};R))^m
    \to
    \bigoplus_{u \in V} K_0(\calh(\widetilde{G_{u}}\cap \widetilde{M};R))^m
  \]
  be the map given by the components $\epsilon_{(v,w),g,u}$ defined as follows.
  For $u = v$ the map $\epsilon_{(v,w),g,v}$  is the $m$-fold direct sum $\gamma_{(v,w),g,v}^m$ of the maps
  $\gamma_{(v,w),g,v}$ defined in~\eqref{gamma_((v,w),g,u)}.
   For $u = w$ we put
  \begin{multline*}
    \epsilon_{(v,w),g,w}   \colon
    K_0(\calh(\widetilde{G_{v}} \cap \widetilde{G_{gw}} \cap \widetilde{M};R))^m
    \xrightarrow{\gamma_{(v,w),g,u}^m}
    K_0(\calh(\widetilde{G_{v}}\cap \widetilde{M};R))^m
    \\
    \xrightarrow{\pi^{\mu(\widetilde{g})}} K_0(\calh(\widetilde{G_{w}}\cap \widetilde{M};R))^m.
  \end{multline*}
  Since $\pi^m = \id$,  the map $\pi^{\mu(\widetilde{g})}$ depends only on $\overline{\mu}(g)$,
   where $\overline{\mu} \colon G \to \IZ/m$ sends $g$ to the image of $\widetilde{g}$ under the projection
  $\IZ \to \IZ/m$ for any choice of an element $\widetilde{g} \in \widetilde{G}$ with $\pr(\widetilde{g}) = g$.

  The cokernel of the map
  \begin{multline*}
  \delta \oplus \epsilon\colon 
  \left(\bigoplus_{v \in V} K_0(\calh(\widetilde{G_{v}} \cap \widetilde{M};R))^m\right)
    \oplus
    \left(\bigoplus_{(v,w) \in E}  \bigoplus_{g \in F(v,w)}
      K_0(\calh(\widetilde{G_{v}} \cap \widetilde{G_{gw}} \cap \widetilde{M};R))^m\right)
      \\
      \to
      \bigoplus_{u \in V} K_0(\calh(\widetilde{G_{u}}\cap \widetilde{M};R))^m
    \end{multline*}
    is $K_0(\calh(\widetilde{G};R))$ because of
    Theorem~\ref{the:connective_case}~\ref{the:connective_case:q_is_0_Sub} and
    Remark~\ref{rem:computation_of_bredon_chain-complex} by the same argument as it
    appears in the proof of Lemma~\ref{lem:cokernel_of_beta_is_K_0(calh(G;R)} since
    $\bigl(\iota^* (|X| \times \IR)\bigr)^K$ is connected for every compact open subgroup
    $K$ of $\widetilde{G}$. It does not matter that $\iota^* (|X| \times \IR)$ is a $\widetilde{G}$-$CW$-complex
    but not a simplicial complex, since in the description of $\beta_{i,j}$ appearing in
    Remark~\ref{rem:computation_of_bredon_chain-complex} the case $j_i(+) = j_-(i)$ never occurs.
    
    We can identify
    $\bigoplus_{v \in V} K_0(\calh(\widetilde{G_{v}}\cap \widetilde{M};R))$ and the cokernel of 
    $\delta$,  since  we have the exact sequence
    $A^m \xrightarrow{\pi - \id} A^m \xrightarrow{\alpha} A \to 0$ for every abelian group $A$.
    The cokernel of $\delta \oplus \epsilon$ is isomorphic the cokernel of the composite
    of $\epsilon$ with the map
    \[\bigoplus_{v \in V}\operatorname{aug} \colon
      \bigoplus_{v \in V} K_0(\calh(\widetilde{G_{v}}\cap \widetilde{M};R))^m
      \to \bigoplus_{v \in V} K_0(\calh(\widetilde{G_{v}}\cap \widetilde{M};R)) = \cok(\delta).
     \]
    For every
    $(v,w) \in E$, $g \in F(v,w)$, and $u \in V$ the diagram
\[
  \xymatrix@!C=16em{K_0(\calh(\widetilde{G_{v}} \cap \widetilde{G_{gw}} \cap \widetilde{M};R))^m
    \ar[r]^-{\epsilon_{(v,w),u}}
    \ar[d]_{\operatorname{aug}}
    &
    K_0(\calh(\widetilde{G_{u}}\cap \widetilde{M};R))^m
    \ar[d]^{\operatorname{aug}}
    \\
    K_0(\calh(\widetilde{G_{v}} \cap \widetilde{G_{gw}} \cap \widetilde{M};R))
    \ar[r]_-{\gamma_{(v,w),u}}
    &
    K_0(\calh(\widetilde{G_{u}}\cap \widetilde{M};R))
  }
\]
commutes, since $\alpha \circ \pi = \alpha$ holds.  This
finishes the proof of Lemma~\ref{lem:cokernel_of_beta_is_K_0(calh(widetilde(G);R)}.
\end{proof}


\section{The projective class group  of the Hecke algebras of $\SL_n(F)$, $\PGL_n(F)$ and $\GL_n(F)$}%
\label{sec:K_0-SL-Gl-PGL}

\typeout{---- Section 6: The projective class group  of the Hecke algebras of $\SL_n(F)$, $\PGL_n(F)$ and $\GL_n(F)$ --}

Next we apply the recipes of
Sections~\ref{sec:The_main_recipe_for_the_computation_of_the_projective_class_group} to
some prominent reductive $p$-adic groups $G$ as an illustration.
For the remainder of this section $R$ is a uniformly regular  ring with $\IQ \subseteq R$.

Note that for a reductive $p$-adic groups $G$ the assembly map
$H_n^G(\EGF{G}{\COP};\bfK_R) \to H_n^G(G/G;\bfK_R) = K_n(\calh(G;R))$ is bijective for all
$n \in \IZ$ by Theorem~\ref{the:FJC_for_Hecke_algebras}. Moreover, the Bruhat-Tits
building $X$ of $G$ or of $G/\cent(G)$ can serve as the desired simplicial complex $X$
appearing in
Section~\ref{sec:The_main_recipe_for_the_computation_of_the_projective_class_group}.  The
original construction of the Bruhat-Tits building can be found
in~\cite{Bruhat-Tits(1972)}.  For more information about buildings we refer
to~\cite{Abramenko-Brown(2008), {Bridson-Haefliger(1999)}, Brown(1998), Ronan(1989)}.  The
space $X$ carries a CAT(0)-metric, which is invariant under the action of $G$ or
$G/\cent(G)$, see~\cite[Theorem~10A.4 on page~344]{Bridson-Haefliger(1999)}, Hence $|X|^H$
is contractible for any compact open subgroup $H$ of $G$ or $G/\cent(G)$, since $X^H$ is a
convex non-empty  subset of $X$ and hence contractible by~\cite[Corollary~II.2.8 on
page~179]{Bridson-Haefliger(1999)}. Therefore the geometric realization of the 
Bruhat-Tits building $X$ is (after possibly subdividing to achieve a cellular action) a
model for $\EGF{G}{\COP}$ or of $\EGF{G/\cent(G)}{\COP}$.


\subsection{$\SL_n(F)$}\label{sec:SL_n(F)}
   
We begin with computing $K_0(\calh(\SL_n(F);R))$, where $F$ is a non-Archimedean local
field with valuation $v \colon F \to \IZ \cup \{ \infty \}$.  The following claims about
the Bruhat-Tits building $X$ for $\SL_n(F)$ (and later about $X'$) can all be verified
from the description of $X$ in~\cite[Sec.~6.9]{Abramenko-Brown(2008)}. 
  
For $l=0,\ldots,n-1$ let $U^{\text{S}}_l$ be the compact open subgroup of $\SL_n(F)$
consisting of all matrices $(a_{ij})$ in $\SL_n(F)$ satisfying $v(a_{i,j}) \geq -1$ for
$1 \leq i \leq n - l < j \leq n$, $v(a_{i,j}) \geq 1$ for $1 \leq j \leq n-l  < i \leq n$ and
$v(a_{i,j}) \geq 0$ for all other $i,j$.   In particular $U^{\text{S}}_0 = \SL_n(\calo)$,
where $\calo = \{z \in F \mid v \geq 0 \}$.  The intersection of the $U^{\text{S}}_l$-s is
the Iwahori subgroup $I^{\text{S}}$ of $\SL_n(F)$. It is given by those matrices $A$ in $\SL_n(F)$
for which $v(a_{i,j}) \ge 1$ for $i > j$ and $v(a_{i,j}) \ge 0$ for $i \le  j$ hold.
  
The $(n-1)$-simplex $\Delta$ can be chosen with an ordering on its vertices such that the
isotropy group of its $l$-th vertex $v_l$ is $U^{\text{S}}_l$.  The isotropy group of a
face $\sigma$ of $\Delta$ is the intersection of the isotropy groups of the vertices of
$\sigma$.  In particular, the isotropy group of $\Delta$ is the Iwahori subgroup $I^S$ of
$\SL_n(F)$.  Consider the map
\begin{equation*}
d^{\SL_n(F)} \colon \bigoplus_{0 \leq i < j \leq n-1} K_0 ( \calh(U_i^\text{S} \cap U_j^\text{S};R))
  \to \bigoplus_{0 \leq l \leq n-1}  K_0 ( \calh(U_l^\text{S};R)),
\end{equation*}
for which the component
$d^{\SL_n(F)}_{i<j,l} \colon K_0 ( \calh(U_i^\text{S} \cap U_j^\text{S};R)) \to
K_0 (\calh(U_l^\text{S};R))$ is given by $-K_0(\calh(f_{i<j}^i;R))$, if $l = i$,
by $K_0(\calh(f_{i<j}^j;R))$, if $l = j$, and is zero, if $l \notin\{i,j\}$, where
$f_{i<j}^{k} \colon U^\text{S}_i \cap U^\text{S}_j \to U^\text{S}_k$ is the inclusion for
$k =i,j$.

Then the cokernel of $d^{\SL_n(F)}$ is  $K_0(\calh(\SL_n(F);R))$ by
Lemma~\ref{lem:cokernel_of_beta_is_K_0(calh(G;R)}
and Remark~\ref{rem:lem:cokernel_of_beta_is_K_0(calh(G;R)_strict_fundamental_domain}.


\subsection{$\PGL_n(F)$}\label{sec:PGL_n(F)}

Next we compute $K_0(\calh(\PGL_n(F);R))$.  The action of $\SL_n(F)$ on
$X$ extends to an action of $\GL_n(F)$.  This action factors through the canonical projection
$\pr \colon \GL_n(F) \to \PGL_n(F)$ to an action of $\PGL_n(F)$.
These actions are still simplicial, but no longer cellular.  Let
\begin{equation*}
  \widehat{h} := \left(
    \begin{array}{cccc}  & 1 & &  
      \\
     &  & \ddots  \\ & & & 1 \\ \zeta
     \end{array} \right) \in \GL_n(F)
\end{equation*}
where we chose a uniformizer $\zeta \in F$, i.e., an element in $F$ satisfying $v(\zeta) = 1$.
Obviously  $\widehat{h}^n$ is the diagonal matrix $\zeta \cdot I_n$,
all whose diagonal entries are $\zeta$, and hence is central
 in $\GL_n(F)$.  Define $h \in \PGL_n(F)$ by $h = \pr(\widehat{h})$.
 Then $h v_l = v_{l+1}$ for $l = 0, \ldots, n-2$ and  $hv_{n-1} =  v_0$
 and $h^n$ is the unit in $\PGL_n(F)$. In
particular, the action of $\PGL_n(F)$ is transitive on the vertices of $X$.  To obtain a
cellular action, $X$ can be subdivided to $X'$ as follows.  The $(n - 2)$-skeleton
of $X$ is unchanged, while the $(n-1)$-simplices of $X$ are in $X'$ replaced
with cones on their boundary.  More formally, the vertices of $X'$ are the vertices of $X$
and the barycenters $b_\sigma$ of $(n-1)$-simplices $\sigma$ of $X$.  A set $S$ of vertices of $X'$
is a simplex of $X'$, if and only if $S$ is a $k$-simplex of $X$ and $k<n-1$ or if $S$
contains exactly one barycenter $b_\sigma$ and for all $v \in S \setminus \{ b_\sigma \}$
are vertices of $\sigma$ (in the simplicial structure of $X$).  The action of $\PGL_n(F)$
on $X'$ is then cellular and is transitive on $(n-1)$-simplices of $X'$.  There are two
orbits of vertices, represented by $v_0$ and $b_\Delta$.  Let $k := \lfloor n/2 \rfloor$.
There are $k+1$ orbits of $1$-simplices, represented by $\{v_0,v_1\}$,\dots,$\{v_0,v_k\}$
and $\{v_0,b_\Delta\}$.  
Next we describe some isotropy groups.
      
For an open subgroup $W \subseteq \PGL_n(F)$ we denote by $\widetilde{W}$
its preimage under the projection $\pr \colon \GL_n(F) \to \PGL_n(F)$.
For $l=0,\dots,n-1$ let $U^{\text{G}}_l$ be the compact open subgroup of $\GL_n(F)$
given by $\widehat{h}^l\GL_n(\calo)\widehat{h}^{-l} = \widetilde{\PGL_n(F)_{v_l}} =\widetilde{\PGL_n(F)_{h_lv_0}}$
In particular $U^{\text{G}}_0 = \GL_n(\calo)$. Note that
\[
  U_l^{\text{G}} \cap \SL_n(F)
  = (\widehat{h}^{l}\GL_n(\calo)\widehat{h}^{-l}) \cap \SL_n(F)
  = \widehat{h}^{l}\SL_n(\calo)\widehat{h}^{-l} = U_l^S
\]
holds. The intersection of the $U^{\text{G}}_l$-s is the Iwahori subgroup $I^{\text{G}}$
of $\GL_n(F)$.  Let $U^{\text{P}}_l$ be the image of $U^{\text{G}}_l$ in $\PGL_n(F)$. This
is the isotropy groups of the vertex $v_l$ for the action of $\PGL_n(F)$.  The Iwahori
subgroup $I^{\text{P}}$ of $\PGL_n(F)$ is the image of $I^{\text{G}}$ under $\pr$. It is
the pointwise isotropy subgroup for $\Delta$.  Let $H$ be the subgroup generated by the
image of $h$ in $\PGL_n(F)$.  It is a cyclic subgroup of order $n$ that cyclically
permutes the vertices of $\Delta$.  This subgroup normalizes $I^{\text{P}}$ and the
isotropy group of $b_\Delta$ is the product $H I^{\text{P}}$.  Recall that $v_l = h^lv_0$ and hence
$U_l^P = h^lU_0Ph^{-l}$

Write
$i_H \colon I^{\text{P}} \to H I^{\text{P}}$, $i_0 \colon I^{\text{P}} \to U_0^\text{P}$,
$c_0 \colon U_0^\text{P} \cap U_i^\text{P} \to U_0^\text{P}$ for the inclusions and define
$c_l \colon U_0^\text{P} \cap U_l^\text{P} \to U_0^\text{P}$ by $ z \mapsto h^{-l}zh^l$.
Let
\begin{multline*}
  d^{\PGL_n(F)} \colon K_0( \calh(I^{\text{P}};R)) \oplus
  \bigoplus_{l =1}^k  K_0 ( \calh(U_0^\text{P} \cap U_l^\text{P};R)) \\
\to K_0( \calh(H I^{\text{P}};R)) \oplus K_0 ( \calh(U_0^\text{P};R))
\end{multline*}
be the map that is $K_0(i_H)  \times - K_0(i_0)$ on $K_0( \calh(I^{\text{P}};R))$ and
$0 \times (K_0(c_l) - K_0(c_0))$ on
$K_0( \calh(U_0^{\text{P}} \cap U_l^{\text{P}};R))$.
The cokernel of the homomorphism $d^{\PGL_n(F)}$ agrees with
$S\!H_0^{\PGL_n(F)}\bigl(X';K_0(\calh(?;R))\bigr)$ by Lemma~\ref{lem:cokernel_of_beta_is_K_0(calh(G;R)},
if , using the notation of Section~\ref{subsec:The_general_case},
we put  $E = \{v_0,b_{\Delta}\}$ with $ v_0 < b_{\Delta}$, $F(v_0.v_0) = \{h,h^2, \ldots, h^k\}$,
and $F(v_0,b_{\Delta}) = \{e\}$.


\subsection{$\GL_n(F)$}\label{sec:GL_n(F)}

Next we compute $K_0(\calh(\GL_n(F);R))$. Note that $\GL_n(F)$ has a non-compact center. Hence
Subsection~\ref{subsec:The_general_case} does not apply and we have to pass to the setting
of Subsection~\ref{subsec:A_variation} using the short exact sequence
$1 \to C = \cent(\GL_n(F)) \to \GL_n(F) \xrightarrow{\pr} \PGL_n(F) \to 1$, the
discussion in Subsection~\ref{sec:PGL_n(F)} and
Lemma~\ref{lem:cokernel_of_beta_is_K_0(calh(widetilde(G);R)}.

Let $\widetilde{M}$ be the kernel of the composite
$\mu \colon \GL_n(F) \xrightarrow{\det} F^{\times}\xrightarrow{\nu}  \IZ$.
Let $\widehat{H} \subseteq \GL_n(F)$ be the infinite cyclic subgroup
generated by the element $\widehat{h}$. Note  that $\widetilde{M} \cap C$ consists
of those diagonal matrices whose entries on the diagonal are all the same and are sent to $0$ under $\nu$.
We conclude  $(\GL_n(\calo) \cdot C) \cap\widetilde{M} = \GL_n(\calo)$ from
$C \cap \widetilde{M} \subseteq \GL_n(\calo) \subseteq \widetilde{M}$.
Recall that for $W \subseteq \PGL_n(F)$ we denote by $\widetilde{W}$
its preimage under $\pr \colon \GL_n(F) \to \PGL_n(F)$.
Since $\pr(U_l^{\text{G}}) = U_l^{\text{P}}$, we get for $l = 0, \ldots, n-1$
\begin{multline*}
 \widetilde{U_l^{\text{P}}} \cap \widetilde{M}
  = (U_l^{\text{G}} \cdot C) \cap \widetilde{M}
  =  (\widehat{h}^l\GL_n(\calo)\widehat{h}^{-l} \cdot C) \cap\widetilde{M}
  \\
  = \widehat{h}^l\bigl((\GL_n(\calo) \cdot C) \cap\widetilde{M}\bigr)\widehat{h}^{-l}
  = \widehat{h}^l\GL_n(\calo)\widehat{h}^{-l}
  = U_l^{\text{G}}.
\end{multline*}
Now one easily checks $\widetilde{I^{\text{P}}} \cap \widetilde{M} = I^{\text{G}}$. Finally we show
$\widetilde{HI^{\text{P}}} \cap \widetilde{M} = I^{\text{G}}$. We get
$I^{\text{G}} \subseteq \widetilde{HI^{\text{P}}} \cap \widetilde{M}$ from
$\widetilde{I^{\text{P}}} \cap \widetilde{M} = I^{\text{G}}$. Consider an element
$A \in \widetilde{HI^{\text{P}}} \cap \widetilde{M}$.  We can find an integer $b$, an element
$B \in I^{\text{G}}$, and an element $D\in C$ such that $A = \widehat{h}^bBD$ and $\nu(A) = 0$
holds. From $I^{\text{G}} \subseteq \widetilde{M}$  we conclude
$\widehat{h}^bD \in \widetilde{M}$. Since $\mu(D)$ is divisible by $n$ and
$\mu(\widehat{h}) =1$ holds, $b$ is divisible by $n$. This implies
$\widehat{h}^b \in C$ and hence $\widehat{h}^bD\in C \cap \widetilde{M}$. 
As $(C \cap \widetilde{M})I^{\text{G}} = I^{\text{G}}$ holds, we conclude $A \in I^{\text{G}}$. Hence
$\widetilde{HI^{\text{P}}} \cap \widetilde{M} = I^{\text{G}}$ holds.

Let
$\widetilde{i}_0 \colon I^{\text{G}}  \to U_0^{\text{G}}$ and 
$\widetilde{c}_0 \colon U_0^{\text{G}}\cap U_i^{\text{G}} \to U_0^{\text{G}}$
be the inclusions and let $\widetilde{c}_l \colon U_0^{\text{G}} \cap U_l^{\text{G}}  \to U_0^{\text{G}}$
be the map  sending $\widetilde{z}$ to $\widehat{h}^{-l}\widetilde{z}\widehat{h}^l$. Let 
\begin{multline*}
  \overline{d}^{\GL_n(F)} \colon K_0( \calh(I^{\text{G}};R)) \oplus
  \bigoplus_{l = 1}^{k} K_0 ( \calh(U_0^{\text{G}} \cap  U_l^{\text{G}};R))
  \\
  \to  
   K_0( \calh (I^{\text{G}} ;R)) \oplus K_0 ( \calh(U_0^{\text{G}};R))
 \end{multline*}
 be the map that is $\id_{K_0(I^{\text{G}})} \times -K_0(\widetilde{i}_0)$ on
 $K_0( \calh(I^{\text{G}};R))$ and
 $0 \times (K_0(\widetilde{c}_l)  - K_0(\widetilde{c}_0))$ on
 $K_0( \calh(U_0^\text{G}\cap U_i^\text{G};R))$.
 The cokernel of the map
 $\overline{d}^{\GL_n(F)}$  is $K_0(\calh(\GL_n(F);R))$
 by Lemma~\ref{lem:cokernel_of_beta_is_K_0(calh(widetilde(G);R)} 
 Let 
 \[
  \widetilde{d}^{\GL_n(F)} \colon 
  \bigoplus_{l = 1}^k  K_0 ( \calh(U_0^{\text{G}} \cap  U_l^{\text{G}};R))
  \to K_0 (\calh(U_0^{\text{G}};R))
\]
be the map which is given by $K_0(\widetilde{c}_l)  - K_0(\widetilde{c}_0)$ on
$K_0( \calh(U_0^\text{G}\cap U_l^{\text{G}};R))$.
Since  $\widetilde{d}^{\GL_n(F)} $ has the same cokernel as   $\overline{d}^{\GL_n(F)}$,
the cokernel of $\widetilde{d}^{\GL_n(F)}$ is
$K_0(\calh(\GL_n(F);R))$.


 \section{Homotopy colimits}\label{sec:homotopy-colimits}

 \typeout{-------------------- Section 7: Homotopy colimits --------------------}


\subsection{The Farrell-Jones assembly map as a map of homotopy colimits}%
\label{subsec:The_Farrell-Jones_assembly_map_asa_map_of_homotopy_colimits}

Next we want to extend the considerations of Section~\ref{sec:K_0-SL-Gl-PGL} to the higher
$K$-groups. For this purpose and the proofs appearing in~\cite
{Bartels-Lueck(2023K-theory_red_p-adic_groups)} it is worthwhile to write down the
assembly map in terms of homotopy colimits.  The projections $G/U \to G/G$ for $U$ compact
open in $G$ induce a map
\begin{equation}
  \hocolimunder_{G/U \in     \Or_\COP(G)} \bfK_R(G/U) \to \bfK_R(G/G) \simeq\bfK(\calh(G;R)).
  \label{assembly_as_homotopy_colimit}
\end{equation}
This map can be identified after applying $\pi_n$ with the assembly map appearing in
Theorem~\ref{the:Main_Theorem}~\ref{the:Main_Theorem:assembly} and
Theorem~\ref{the:FJC_for_Hecke_algebras}.  This follows
from~\cite[Section~5]{Davis-Lueck(1998)}.


\subsection{Simplifying the source of the  Farrell Jones assembly map}%
\label{subsec:Simplifying_the_source_of_the__Farrell-Jones_assembly_map}

Let $X$ be an abstract simplicial complex with  simplicial $G$-action such 
that the isotropy group of each vertex is
compact open and  the $G$-action is cellular. Furthermore we  assume that $|X|^K$
is weakly contractible for any compact open subgroup of $G$. Then $|X|$ is a model for
$\EGF{G}{\COP}$.

Let $C$ be a collection of simplices of $X$ that contains at least one simplex from each
orbit of the action of $G$ on the set of simplices of $X$.  Define a category $\calc(C)$
as follows.  Its objects are the simplices from $C$.   A morphism
$gG_{\sigma} \colon \sigma \to \tau$ is an element $gG_{\sigma} \in G/G_{\sigma}$
satisfying $g\sigma \subseteq \tau$. The composite of $gG_{\sigma} \colon \sigma \to \tau$ with
$hG_{\tau} \colon \tau \to \rho$ is $hgG_{\sigma} \colon \sigma \to \rho$. Define a
functor

\begin{equation}
  \iota_C \colon \calc(C)^{\op} \to \OrGF{G}{\COP}
  \label{iota_C}
\end{equation}
by sending an object $\sigma$ to $G/G_\sigma$ and a morphism
$gG_{\sigma} \colon \sigma \to \tau$ to
$R_{g} \colon G/G_{\tau} \to G/G_{\sigma}, \; g'G_{\tau}\mapsto g'gG_{\sigma}$.

\begin{lemma}\label{lem:homotopy_equivalence_induced_by_iota}
  Under the assumptions above the map induced by the functor $\iota_C$
  \[
    \hocolimunder_{\sigma \in \calc(C)^{\op}} \bfK_R(G/G_{\sigma}) \xrightarrow{\sim}
    \hocolimunder_{G/U \in \OrGF{G}{\COP}} \bfK_R(G/U)
  \]
  is a weak homotopy equivalence.
\end{lemma}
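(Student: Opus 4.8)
The plan is to deduce the statement from the cofinality theorem for homotopy colimits: I would show that the functor $\iota_C$ of~\eqref{iota_C} is \emph{homotopy final}, so that it induces a weak homotopy equivalence on homotopy colimits of \emph{every} $\Spectra$-valued diagram on $\OrGF{G}{\COP}$, in particular on $\bfK_R$; compare~\cite[Section~5]{Davis-Lueck(1998)} for the homotopy colimit framework used here. By that theorem it suffices to prove that for every object $G/U$ of $\OrGF{G}{\COP}$ the nerve of the comma category $G/U \downarrow \iota_C$ is weakly contractible. Recall that objects of $G/U \downarrow \iota_C$ are pairs $(\sigma,\psi)$ with $\sigma \in C$ and $\psi \in \map_G(G/U, G/G_\sigma)$, and a morphism $(\sigma,\psi)\to(\sigma',\psi')$ is a morphism $f \colon \sigma \to \sigma'$ in $\calc(C)^{\op}$ with $\iota_C(f)\circ\psi = \psi'$.

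The key step is to identify this comma category. Using the bijection~\eqref{identification_of_map_G(G/H,G/K)}, write $\psi = R_{g^{-1}}$ for some $g \in G$ with $gUg^{-1} \subseteq G_\sigma$; then $g$ is determined by $\psi$ up to left multiplication by $G_\sigma$, and the simplex $\tau := g^{-1}\sigma$ of $X$ satisfies $U \subseteq g^{-1}G_\sigma g = G_\tau$ and depends only on $(\sigma,\psi)$. Since the $G$-action on $X$ is cellular, the simplices fixed pointwise by $U$ span a subcomplex $X^U$ with $|X^U| = |X|^U$, and $\tau$ is a simplex of $X^U$; conversely every simplex of $X^U$ lies in the $G$-orbit of some $\sigma \in C$ and so arises this way. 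Unwinding~\eqref{identification_of_map_G(G/H,G/K)} together with the composition law of $\calc(C)$ and the definition of $\iota_C$, one checks that for objects $(\sigma,\psi)$, $(\sigma',\psi')$ with associated simplices $\tau,\tau'$ of $X^U$ there is at most one morphism $(\sigma,\psi)\to(\sigma',\psi')$ in the comma category, and it exists if and only if $\tau' \subseteq \tau$. When $C$ contains exactly one simplex from each $G$-orbit this yields an isomorphism between $G/U\downarrow\iota_C$ and the poset of simplices of $X^U$ ordered by reverse inclusion; for a larger $C$ one only adjoins isomorphisms between objects lying over the same simplex, which leaves the homotopy type of the nerve unchanged. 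Hence $N(G/U\downarrow\iota_C)$ is the order complex of the poset of simplices of $X^U$, hence homeomorphic to $|X^U| = |X|^U$, which is weakly contractible (in particular non-empty) by hypothesis. This proves that $\iota_C$ is homotopy final and completes the argument.

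The main obstacle is precisely this identification of the comma category $G/U \downarrow \iota_C$ with the simplex poset of $X^U$: one has to carefully translate the descriptions of $\map_G(G/H,G/K)$, of composition in $\calc(C)$, and of $\iota_C$ into face relations among the translated simplices $g^{-1}\sigma$, and to use the cellularity hypothesis exactly where it is needed, namely to see that the $U$-fixed simplices form a subcomplex realizing $|X|^U$. The remaining points --- invoking the cofinality theorem, and handling a collection $C$ with more than one simplex per orbit --- are routine, and the proof uses nothing special about $\bfK_R$ beyond its being a functor $\OrGF{G}{\COP} \to \Spectra$.
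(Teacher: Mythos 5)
Your proposal is correct and follows essentially the same route as the paper: both reduce to the Dwyer--Kan cofinality criterion by showing the nerve of the comma category $G/U \downarrow \iota_C$ is contractible, and both do so by identifying that comma category (up to equivalence) with the poset of simplices of $X^U$, whose nerve realizes the barycentric subdivision of the weakly contractible space $|X|^U$. The only cosmetic difference is the direction of the identification (you map the comma category onto the simplex poset, the paper constructs an equivalence from $\calp(X^K)^{\op}$ into the comma category), which changes nothing of substance.
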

\begin{proof} We want to apply the criterion~\cite[9.4]{Dwyer-Kan(1984a)}.  So we have to
  show that the geometric realization of the nerve of the category $G/K\downarrow \iota_C$ is
  a contractible space for every object $G/K$ in $\OrGF{G}{\COP}$.  An object in
  $G/K \downarrow \iota_C$ is a pair $(\sigma, u)$ consisting of an element $\sigma \in C$
  and a $G$-map $u \colon G/K \to G/G_{\sigma}$. A  morphism $(\sigma, u) \to (\tau,v)$ in
  $G/K\downarrow \iota_C$ is given by a morphism $gG_{\tau} \colon \tau \to \sigma$
  in $\calc(C)$ such that the $G$-map $R_g \colon G/G_{\sigma} \to G/G_{\tau}$
  sending $zG_{\sigma}$ to $zgG_{\tau}$ satisfies  $v \circ R_g = u$.

  Let $\calp(X^K)$ be the poset given by the simplices of $X^K$ ordered by inclusion.
  Then we get an equivalence of categories
  \[F \colon \calp(X^K)^{\op} \xrightarrow{\simeq} G/K \downarrow \iota_C
  \]
  as follows. It sends a simplex $\sigma$ to the object
  $(\sigma, \pr_{\sigma} \colon G/K \to G/G_{\sigma})$ for the canonical projection
  $\pr_{\sigma}$. A morphism $\sigma \to  \tau$ in  $\calp(X^K)^{\op}$
  is sent to the morphism
  $(\sigma, \pr_{\sigma}) \to (\tau,\pr_{\tau})$ in $G/K \downarrow \iota_C$
  which is given by the morphism 
  $eG_{\tau} \colon \tau \to \sigma$ in $\calc(C)$.

  Consider an object $(\sigma, u)$ in $G/K \downarrow \iota_C$.  We
  want to show that it is isomorphic to an object in the image of $F$.
  Choose $g \in G$ such that $g^{-1}Kg \subseteq G_{\sigma}$ holds and
  $u$ is the $G$-map $R_g \colon G/K \to G/G_{\sigma}$ sending $zK$ to
  $zgG_{\sigma}$. Then $K \subseteq G_{g\sigma}$ and we can consider
  the object $F(g\sigma) = (g\sigma, \pr_{g\sigma})$ for the
  projection $\pr_{g\sigma} \colon G/K \to G_{g\sigma}$. Now the
  isomorphism $gG_{\sigma} \colon \sigma \to g\sigma$ in $\calc(C)$
  induces an isomorphism $F(g\sigma) \xrightarrow{\cong} (\sigma, u)$
  in $G/K \downarrow \iota_C$.

  Obviously $F$ is faithful. It remains to show that $F$ is full.  Fix
  two objects $\sigma$ and $\tau$ in $\calp(X^K)$.  Consider a
  morphism
  $f \colon F(\sigma) = (\sigma, \pr_{\sigma}) \to F(\tau) = (\tau,
  \pr_{\tau})$ in $G/K \downarrow \iota_C$.  It is given by a morphism
  $gG_{\tau} \colon \tau \to \sigma$ in $\calc(C)$ such that the
  composite of $R_g \colon G/G_{\sigma} \to G/G_{\tau}$ with
  $\pr_{\sigma}$ is $\pr_{\tau}$. This implies $gG_{\tau} = G_{\tau}$
  and hence $g \in G_{\tau}$.  Since $g\tau \subseteq \sigma$ holds by
  the definition of a morphism in $\calc(C)$, we get
  $\tau \subseteq \sigma$.  Hence $f$ is the image of the morphism
  $\sigma \to \tau$ under $F$. This shows that $F$ is full.

  Hence it remains to show that geometric realization of the nerve of $\calp(X^K)^{\op}$ is
  contractible.  Since this is the barycentric subdivision of $|X|^K$, this follows from the assumptions.
\end{proof}

Suppose additionally that $X$ admits a strict fundamental domain $\Delta$, i.e., a
simplicial subcomplex $\Delta$ that contains exactly one simplex from each orbit for the
$G$-action on the set of simplices of $X$.  Then we can take for $C$ the simplices from
$\Delta$.  In this case $\calc(C)$ can be identified with the poset $\calp(\Delta)$ of
simplices of $\Delta$.  Recall that for any open subgroup $U$ of $G$,
there is an explicit weak homotopy equivalence
$\bfK(\calh(U;R)) \xrightarrow{\simeq} \bfK_R(G/U)$, where the source is the 
$K$-theory spectrum $\bfK(\calh(U;R))$ of the Hecke algebra $\calh(U;R)$,
see~\cite[5.6 and Remark~6.7]{Bartels-Lueck(2023foundations)}.
 Lemma~\ref{lem:homotopy_equivalence_induced_by_iota} implies

 \begin{theorem}\label{the:homotopy_equivalence_induced_by_iota_strict_fundamental_domain}
   Let $X$ be an abstract simplicial complex with a simplicial $G$-action such that the isotropy group
   of each vertex  is compact open, the $G$-action is cellular, and $|X|^K$ is weakly
  contractible for every compact open subgroup $K$ of $G$. Let $\Delta$ be a strict
  fundamental domain. 

  Then the assembly map
  \begin{equation}\label{eq:COP-assembly-map-hocolim-cala-Delta}
    \hocolimunder_{\sigma \in \calp(\Delta)^{\op}} \bfK(\calh(G_{\sigma};R))
    \to \hocolimunder_{G/U \in \OrGF{G}{\COP}} \bfK_R(G/U)
  \end{equation}
  that is induced by the functor $\calp(\Delta)^{\op} \to \OrGF{G}{\COP}$ sending
  a simplex $\sigma$ to $G_{\sigma}$, is a weak homotopy equivalence,
\end{theorem}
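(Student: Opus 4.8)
The plan is to obtain the statement as a direct consequence of Lemma~\ref{lem:homotopy_equivalence_induced_by_iota}, applied to the collection $C$ consisting of all simplices of the strict fundamental domain $\Delta$, combined with the natural weak equivalence $\bfK(\calh(U;R)) \xrightarrow{\simeq} \bfK_R(G/U)$ for open subgroups $U \subseteq G$.

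First I would record why, for this choice of $C$, the category $\calc(C)$ of Lemma~\ref{lem:homotopy_equivalence_induced_by_iota} is canonically isomorphic to the poset $\calp(\Delta)$ of simplices of $\Delta$ ordered by inclusion, as already indicated in the text preceding the theorem. A morphism $gG_{\sigma} \colon \sigma \to \tau$ in $\calc(C)$ requires $g\sigma \subseteq \tau$; since $\Delta$ is a simplicial subcomplex, $g\sigma$ is then itself a simplex of $\Delta$ lying in the same $G$-orbit as $\sigma$, so the defining property of a strict fundamental domain forces $g\sigma = \sigma$, hence $g \in G_{\sigma}$ and $gG_{\sigma} = G_{\sigma}$. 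Thus there is at most one morphism $\sigma \to \tau$, and it exists precisely when $\sigma \subseteq \tau$. Under this identification the functor $\iota_C$ of~\eqref{iota_C} becomes the functor $\calp(\Delta)^{\op} \to \OrGF{G}{\COP}$ sending $\sigma$ to $G/G_{\sigma}$ and sending the relation $\sigma \subseteq \tau$ — which induces an inclusion $G_{\tau} \subseteq G_{\sigma}$ of compact open subgroups, since the action is cellular and hence $G_{\sigma}$ is the intersection of the isotropy groups of the vertices of $\sigma$ — to the canonical projection $R_e \colon G/G_{\tau} \to G/G_{\sigma}$. Because $|X|$ is a model for $\EGF{G}{\COP}$ under the present hypotheses, Lemma~\ref{lem:homotopy_equivalence_induced_by_iota} applies and gives a weak homotopy equivalence
\[
  \hocolimunder_{\sigma \in \calp(\Delta)^{\op}} \bfK_R(G/G_{\sigma}) \xrightarrow{\sim} \hocolimunder_{G/U \in \OrGF{G}{\COP}} \bfK_R(G/U).
\]

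Next I would invoke the explicit natural weak equivalence $\bfK(\calh(U;R)) \xrightarrow{\simeq} \bfK_R(G/U)$ for open $U \subseteq G$ from~\cite[5.6 and Remark~6.7]{Bartels-Lueck(2023foundations)}. Its naturality with respect to the ring homomorphisms $\calh(G_{\tau};R) \to \calh(G_{\sigma};R)$ induced by the inclusions $G_{\tau} \hookrightarrow G_{\sigma}$ turns it into an objectwise weak equivalence of $\calp(\Delta)^{\op}$-diagrams of spectra, the left-hand diagram being $\sigma \mapsto \bfK(\calh(G_{\sigma};R))$. Since the homotopy colimit preserves objectwise weak equivalences, I obtain a weak homotopy equivalence $\hocolimunder_{\sigma \in \calp(\Delta)^{\op}} \bfK(\calh(G_{\sigma};R)) \xrightarrow{\sim} \hocolimunder_{\sigma \in \calp(\Delta)^{\op}} \bfK_R(G/G_{\sigma})$. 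Composing it with the weak homotopy equivalence from the previous paragraph yields the assertion, and inspection shows that the composite agrees with the map induced by the functor $\calp(\Delta)^{\op} \to \OrGF{G}{\COP}$ described in the statement.

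The argument is essentially formal once Lemma~\ref{lem:homotopy_equivalence_induced_by_iota} is in hand; the only place that needs genuine (if routine) care is the identification $\calc(C) \cong \calp(\Delta)$ together with the resulting description of $\iota_C$, which is exactly where the strict fundamental domain hypothesis enters. I expect no serious obstacle, since the hypotheses imposed here — cellular simplicial $G$-action, isotropy groups of vertices compact open, $|X|^K$ weakly contractible for all compact open $K$ — are precisely those under which Lemma~\ref{lem:homotopy_equivalence_induced_by_iota} was established.
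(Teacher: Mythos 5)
Your proposal is correct and follows exactly the paper's route: the paper also takes $C$ to be the simplices of $\Delta$, identifies $\calc(C)$ with $\calp(\Delta)$, invokes the natural weak equivalence $\bfK(\calh(U;R)) \xrightarrow{\simeq} \bfK_R(G/U)$, and deduces the statement from Lemma~\ref{lem:homotopy_equivalence_induced_by_iota}. Your verification that the strict fundamental domain forces $g\sigma = \sigma$ and hence $\calc(C)\cong\calp(\Delta)$ is a detail the paper only asserts, and you supply it correctly.
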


\begin{example}[$\SL_n(F)$]\label{ex:SL} Let $X$ be the Bruhat-Tits building for
  $\SL_n(F)$.  Then the canonical $\SL_n(F)$ action on $X$ is cellular. We will use again the notation introduced in
  Section~\ref{sec:K_0-SL-Gl-PGL}.  The $(n-1)$-simplex $\Delta$, viewed as a
  subcomplex of $X$, is a strict fundamental domain.
  Applying this in the case $n=2$ yields the
  homotopy pushout diagram
  \begin{equation*}
    \xymatrix{\bfK(\calh(I^\text{S};R)) \ar[r] \ar[d] & \bfK(\calh(U_{1}^\text{S};R)) \ar[d]
      \\ 
      \bfK(\calh(U_{0}^\text{S};R)) \ar[r] & \bfK(\calh(\SL_2(F);R)).}
  \end{equation*}  
  For the K-groups this yields a Mayer-Vietoris sequence, infinite to the left,
  \begin{multline}
    \cdots \to K_n(\calh(I^\text{S};R))  \to K_n(\calh(U_{1}^\text{S};R)) \oplus  K_n(\calh(U_{0}^\text{S};R))  \to
    K_n(\calh(\SL_2(F);R))
    \\
    \to  K_{n-1}(\calh(I^\text{S};R))  \to K_{n-1}(\calh(U_{1}^\text{S};R)) \oplus  K_{n-1}(\calh(U_{0}^\text{S};R))  \to \cdots
    \\
    \cdots \to K_0(\calh(I^\text{S};R))  \to K_0(\calh(U_{1}^\text{S};R)) \oplus  K_0(\calh(U_{0}^\text{S};R))  \to
    K_0(\calh(\SL_2(F);R)) \to 0
    \label{MV_sequence_SL_n}
  \end{multline}
  and  $K_n(\calh(\SL_2(F);R)) = 0$ for $n \le -1$.
  
  For $n=3$ we obtain the homotopy push-out diagram

  \begin{equation*}
    \xymatrix@!C=5em{&  \bfK(\calh(U_{12}^\text{S};R)) \ar[rr] \ar[dd]|\hole
      & & \bfK(\calh(U_{2}^\text{S};R)) \ar[dd]
      \\
      \bfK(\calh(I^\text{S};R))  \ar[ur]\ar[rr]\ar[dd]
      & & \bfK(\calh(U_{02}^\text{S};R)) \ar[ur]\ar[dd]
      \\
      & \bfK(\calh(U_{1}^\text{S};R)) \ar[rr] |!{[r]}\hole
      & & \bfK(\calh(\SL_3(F);R)) \\
      \bfK(\calh(U_{01}^\text{S};R)) \ar[rr]\ar[ur]
      & & \bfK(\calh(U_{0}^\text{S};R)) \ar[ur]
    }
  \end{equation*} 
  where we abbreviated $U^\text{S}_{ij} := U^\text{S}_i \cap U^\text{S}_j$.  In general,
  for $\SL_n(F)$ we obtain a homotopy push-out diagram whose shape is an n-cube.

  To such an $n$-cube there is assigned a spectral sequence concentrated in the region for
  $p \ge 0$ and $0 \le q \le n-1$, which corresponds to the spectral sequence appearing in
  Theorem~\ref{the:Main_Theorem}~\ref{the:Main_Theorem:spectral_sequence}
\end{example}


\section{Allowing central characters  and actions on the coefficients}%
\label{sec:Allowing_central_characters_and_G-actions_on_the_coefficients}

\typeout{--- Section 9: Allowing central characters  and actions on the coefficients -------}

So far we have only considered the standard Hecke algebra $\calh(G;R)$.
There are more general Hecke
algebras $\calh(G;R,\rho,\omega)$, see~\cite{Bartels-Lueck(2022KtheoryHecke)},
and all the discussions of this paper carry over to them in the obvious way.


\def\cprime{$'$} \def\polhk#1{\setbox0=\hbox{#1}{\ooalign{\hidewidth
  \lower1.5ex\hbox{`}\hidewidth\crcr\unhbox0}}}


\begin{thebibliography}{10}

\bibitem{Abramenko-Brown(2008)}
P.~Abramenko and K.~S. Brown.
\newblock {\em Buildings}, volume 248 of {\em Graduate Texts in Mathematics}.
\newblock Springer, New York, 2008.
\newblock Theory and applications.

\bibitem{Bartels-Lueck(2022KtheoryHecke)}
A.~Bartels and W.~L{\"u}ck.
\newblock On the algebraic {$K$}-theory of {H}ecke algebras.
\newblock preprint, arXiv:2204.07982 [math.KT], to appear in the Festschrift
  \emph{Mathematics Going Forward}, Lecture Notes in Mathematics~2313,
  Springer, 2022.

\bibitem{Bartels-Lueck(2023K-theory_red_p-adic_groups)}
A.~Bartels and W.~L{\"u}ck.
\newblock Algebraic {$K$}-theory of reductive $p$-adic groups.
\newblock Preprint, arXiv:2306.03452 [math.KT], 2023.

\bibitem{Bartels-Lueck(2023foundations)}
A.~Bartels and W.~L{\"u}ck.
\newblock Inheritance properties of the {$K$}-theoretic {F}arrell-{J}ones
  {C}onjecture for totally disconnected groups.
\newblock Preprint arXiv:2306.01518 [math.KT], 2023.

\bibitem{Bernstein(1992)}
J.~Bernstein.
\newblock Draft of: Representations of $p$-adic groups.
\newblock
  http//www.math.tau.ac.il/~bernstei/Unpublished\_texts/Unpublished\_list.html,
  1992.

\bibitem{Bridson-Haefliger(1999)}
M.~R. Bridson and A.~Haefliger.
\newblock {\em Metric spaces of non-positive curvature}.
\newblock Springer-Verlag, Berlin, 1999.
\newblock Die Grundlehren der mathematischen Wissenschaften, Band 319.

\bibitem{Brown(1998)}
K.~S. Brown.
\newblock {\em Buildings}.
\newblock Springer-Verlag, New York, 1998.
\newblock Reprint of the 1989 original.

\bibitem{Bruhat-Tits(1972)}
F.~Bruhat and J.~Tits.
\newblock Groupes r\'eductifs sur un corps local.
\newblock {\em Inst. Hautes \'Etudes Sci. Publ. Math.}, 41:5--251, 1972.

\bibitem{Dat(2003)}
J.-F. Dat.
\newblock Quelques propri\'et\'es des idempotents centraux des groupes
  {$p$}-adiques.
\newblock {\em J. Reine Angew. Math.}, 554:69--103, 2003.

\bibitem{Dat(2007)}
J.-F. Dat.
\newblock Th\'eorie de {L}ubin-{T}ate non-ab\'elienne et repr\'esentations
  elliptiques.
\newblock {\em Invent. Math.}, 169(1):75--152, 2007.

\bibitem{Davis-Lueck(1998)}
J.~F. Davis and W.~L{\"u}ck.
\newblock Spaces over a category and assembly maps in isomorphism conjectures
  in ${K}$- and ${L}$-theory.
\newblock {\em $K$-Theory}, 15(3):201--252, 1998.

\bibitem{Dwyer-Kan(1984a)}
W.~G. Dwyer and D.~M. Kan.
\newblock A classification theorem for diagrams of simplicial sets.
\newblock {\em Topology}, 23(2):139--155, 1984.

\bibitem{Garrett(2012)}
P.~Garrett.
\newblock Smooth representations of totally disconnected groups.
\newblock unpublished notes, https://www-users.cse.umn.edu/\textasciitilde
  garrett/m/v/smooth\_of\_td.pdf, 2012.

\bibitem{Lueck(1989)}
W.~L{\"u}ck.
\newblock {\em Transformation groups and algebraic ${K}$-theory}, volume 1408
  of {\em Lecture Notes in Mathematics}.
\newblock Springer-Verlag, Berlin, 1989.

\bibitem{Lueck(2002b)}
W.~L{\"u}ck.
\newblock Chern characters for proper equivariant homology theories and
  applications to ${K}$- and ${L}$-theory.
\newblock {\em J. Reine Angew. Math.}, 543:193--234, 2002.

\bibitem{Lueck(2002d)}
W.~L{\"u}ck.
\newblock The relation between the {B}aum-{C}onnes conjecture and the trace
  conjecture.
\newblock {\em Invent. Math.}, 149(1):123--152, 2002.

\bibitem{Ronan(1989)}
M.~Ronan.
\newblock {\em Lectures on buildings}.
\newblock Academic Press Inc., Boston, MA, 1989.

\bibitem{Switzer(1975)}
R.~M. Switzer.
\newblock {\em Algebraic topology---homotopy and homology}.
\newblock Springer-Verlag, New York, 1975.
\newblock Die Grundlehren der mathematischen Wissenschaften, Band 212.

\end{thebibliography}


\end{document}